\newcommand{\n}[1]{\mathbf{ #1}}
\newcommand{\nv}{\n{n}}
\newcommand{\tv}{{\boldsymbol{\tau}}}
\newcommand{\Grad}{\nabla}
\newcommand{\sign}{\mathrm{sgn}}
\newcommand{\R}{\mathbb{R}}
\newcommand{\C}{\mathbb{C}}
\newcommand{\vectornorm}[1]{\left \|#1 \right \|}
\newcommand{\abs}[1]{\left | #1 \right |}
\newcommand{\admshape}{\mathcal{D}}
\newcommand{\alphaspace}{\mathcal{A}}
\newcommand{\dint}{\,d}
\newcommand{\gt}{\tilde{g}}
\newcommand{\Omegab}{\bar{\Omega}}
\newcommand{\Psit}{\tilde{\Psi}}
\newcommand{\omegat}{\tilde{\omega}}
\DeclareMathOperator{\minimize}{minimize}
\renewcommand\Re{\operatorname{Re}}
\newtheorem{remark}[theorem]{Remark}
\title{A Numerical Approach to Shape Optimization with State Constraints}
\author{C. Leith\"auser
	\thanks{Department of Mathematics, TU Kaiserslautern, Kaiserslautern, Germany. 
	Email: christian.leithaeuser@itwm.fraunhofer.de}
        \and R. Pinnau
        \thanks{Department of Mathematics, TU Kaiserslautern, Kaiserslautern, Germany.}
        \and R. Fe{\ss}ler
        \thanks{Fraunhofer ITWM, Kaiserslautern, Germany.}}
\begin{document}

\maketitle

\begin{abstract}
 We present a general numerical approach to shape optimization with state constraints for 2-dimensional geometries, without relaxing the constraints. To do this we reformulate the problem on a fixed reference domain using conformal pull-back. The shape dependence is then hidden in a conformal parameter, which appears as a coefficient in the differential operators. The problem on the reference domain can be discretized, leading to an NLP which can be handled using existing solvers. Furthermore, we deal with the question how constraints on the conformal parameter can be used to preserve characteristic features of the geometry. We introduce this approach with the help of a stokes flow, where the task is finding a shape such that the wall shear stress is supremum norm close to some given target.
\end{abstract}

\begin{keywords} 
 Shape optimization, Optimal control, Supremum norm, State constraints, Conformal map
\end{keywords}

\begin{AMS}
 49Q10,  90C90, 30C20
\end{AMS}

\pagestyle{myheadings}
\thispagestyle{plain}

\section{Introduction}

In this paper we present a general numerical approach to shape optimization with state constraints. Our line of action is to first reformulate the optimization problem on a fixed reference domain, using conformal pull-back. This leads to a nonlinear elliptic optimal control problem with state constraints which can be discretized and solved by nonlinear programming (NLP) techniques.

The existing strategies for shape optimization problems with state constraints include treating the constraint through a penalty term in the cost functional (cf. \cite{ramakrishnan1974structural,mohammadi2001applied}). This however does not assure that the constraint is fulfilled in a strict sense. Optimal control problems with state constraints, i.e. without shape dependence, are actively studied in the literature. For the general theory and several applications we refer to \cite{hinze2009optimization}. First order necessary and second order sufficient conditions are derived in \cite{casas1993boundary,casas2000second}. Numerical approaches to a variety of problems can be found in \cite{bergounioux1997augmented,maurer2000optimization, maurer2001optimization,grund2001optimal,hintermüller2009moreau}. A common praxis, which assures that the state constraints are treated in a strict sense, is to discretize the control problem leading to a nonlinear programming problem which can be solved using NLP-techniques. There are basically two options for the discretization concept: One can either discretize both control and state variables and implement their relation explicitly through equality constraint. Or one can treat the discretized control as the only optimization variable and compute the state as a function of the control. See \cite{grund2001optimal} for a comparison of these approaches. For our case we utilize the first setting, such that both control and state appear as variables of the NLP. Especially for nonlinear problems the choice of the NLP-solver is of great importance. As suggested in \cite{maurer2000optimization} we use the interior point method developed in \cite{vanderbei1999interior}. 

For an overview of the general theory of shape optimization we refer to \cite{pironneau1984optimal,sokolowski1992introduction,mohammadi2001applied,harbrecht2008analytical}. See \cite{eppler2008convergence} for existence and convergence results of general elliptic shape optimization problems. See \cite{allaire2004structural, penzler2010phase} for examples of structural optimization. However, most concepts from standard theory do not apply to problems with state constraints. At least not without relaxing the constraints, which we want to avoid. For our approach we exploit the Riemann mapping theorem (cf. \cite{schinzinger2003conformal}) which states that any two simply connected domains in $\R^2$ can be mapped onto each other by conformal maps. Conformal maps however are determined by a scalar function which we call the conformal parameter. We can utilize this by pulling-back the optimization system to a fixed reference domain, where the shape dependence is then hidden in the conformal parameter which influences the differential operator as a coefficient. Thus, we can optimize on the reference domain and reconstruct the optimal domain later from the optimal conformal parameter.

We introduce the approach with the help of a Stokes flow with supremum norm cost functional. The task is to find a domain $\Omega$ such that the wall shear stress on the boundary is supremum norm close to some given target wall shear stress. Our application in view is the improvement of polymer distributors as they are used in fiber production. Another application from hemodynamics can be found in \cite{quarteroni2003optimal} and \cite{rozza2005optimization}.

We begin by introducing basic concepts about conformal maps in Section \ref{sec:conformal maps}. The flow problem under consideration is given in Section \ref{sec:stokes flow} and a shape optimization problem with supremum norm cost functional is formulated in Section \ref{sec:linf:optimization problem}. The existence of an optimal control is shown in Section \ref{sec:ex optimal control}. Since the influence of the conformal parameter is global, Section \ref{sec:inflow constraints} deals with the question of how characteristic features of the geometry, like the shape of the inflow boundaries, can be preserved by applying constraints to the conformal parameter. Then, Section \ref{state:sec:discretization} explains how the NLP is obtained through discretization by finite elements. Numerical results are presented in Section \ref{sec:numerical results} after which we close with a conclusion.

\section{Conformal Maps}
\label{sec:conformal maps}
Conformal maps are a special class of diffeomorphisms which are angle preserving. We use them to pull-back a shape-dependent problem to a fixed reference domain. The shape information is then hidden in a so called conformal parameter, which is a scalar function living on the reference domain. 
\begin{definition}
 Let $\Omega_0, \Omega_\alpha \subset \R^2$ be two-dimensional domains. Then a $k$-diffeomorphism $T = (T_1, T_2): \Omega_0 \rightarrow \Omega_\alpha$, $k \geq 1$ is called conformal map, if it fulfills the Cauchy-Riemann equations
 \begin{align}
 \begin{aligned}
  \partial_1 T_1 &= \partial_2 T_2
  \\
  \partial_2 T_1 &= -\partial_1 T_2
 \end{aligned}
 \end{align}
 on $\Omega_0$. Therefore, it is possible to identify conformal maps with holomorphic complex functions. We define the conformal parameter $\alpha \in C^{k-1}(\Omegab_0)$ such that
 \begin{align}
  e^{2\alpha} = \det(D T).
 \end{align}
In the following we write $T_\alpha: \Omega_0 \rightarrow \Omega_\alpha$ for a conformal map corresponding to the conformal parameter $\alpha$.
\end{definition}

Conformal maps can also be defined in higher dimensions, however, already in three dimensions the set of reachable domains is negligible small. On the other hand in two dimensions the Riemann Mapping Theorem states that all simply connected domains can be reached from a simply connected reference domain by conformal deformations:
\begin{theorem}[Riemann Mapping Theorem, see \cite{schinzinger2003conformal}]
 Let $\Omega_0, \Omega_1 \subset \R^2$ be two sufficiently regular simply connected domains. Then, there exists a conformal map $T: \Omega_0 \rightarrow \Omega_1$.
\end{theorem}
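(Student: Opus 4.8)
The statement is the classical Riemann mapping theorem, so the plan is to follow the standard normal--families proof, reducing first to the case in which the target is the open unit disk. Identifying $\R^2$ with $\C$, I would use that conformal maps are closed under composition and inversion (a holomorphic bijection has holomorphic inverse), so it suffices to produce, for each sufficiently regular simply connected $\Omega \subsetneq \C$ and each fixed $z_0 \in \Omega$, a conformal bijection $f_\Omega$ from $\Omega$ onto the open unit disk $\mathbb{D} = \{ z \in \C : \abs{z} < 1 \}$ with $f_\Omega(z_0) = 0$; then $T := f_{\Omega_1}^{-1} \circ f_{\Omega_0}$ is the desired map. The hypothesis ``sufficiently regular'' plays two roles: it excludes $\Omega = \C$ (for which no such $f_\Omega$ exists, by Liouville), and it provides enough boundary smoothness that, by the Kellogg--Warschawski theory, $T$ extends to a $k$-diffeomorphism $\Omegab_0 \to \Omegab_\alpha$, so that the conformal parameter $\alpha = \log\abs{T'}$ of the Definition lies in $C^{k-1}(\Omegab_0)$.

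For the existence of $f_\Omega$ I would work with the family $\mathcal{F}$ of injective holomorphic maps $f : \Omega \to \mathbb{D}$ with $f(z_0) = 0$, and proceed in three steps. (i) $\mathcal{F} \neq \emptyset$: since $\Omega$ omits some point $a$, the zero-free function $z \mapsto z - a$ on the simply connected $\Omega$ has a single-valued holomorphic square root $g$; $g$ is injective and $g(\Omega) \cap (-g(\Omega)) = \emptyset$, so $g(\Omega)$ avoids a disk, and composing with a suitable M\"obius map carries $\Omega$ into $\mathbb{D}$ with $z_0 \mapsto 0$. (ii) Extremal element: maximize $\abs{f'(z_0)}$ over $\mathcal{F}$; since $\mathcal{F}$ is uniformly bounded it is normal by Montel, a maximizing sequence subconverges locally uniformly to some $f$ with $\abs{f'(z_0)} = \sup > 0$, and $f$ is injective by Hurwitz, hence $f \in \mathcal{F}$. (iii) Surjectivity: show $f(\Omega) = \mathbb{D}$.

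Step (iii) is the crux and the step I expect to be the main obstacle. I would argue by contradiction via the Koebe square-root trick: if $w_0 \in \mathbb{D} \setminus f(\Omega)$, then the disk automorphism $\varphi_{w_0}(\zeta) = (\zeta - w_0)/(1 - \overline{w_0}\zeta)$ makes $\varphi_{w_0}\circ f$ zero-free, so (again using simple connectedness) it admits a holomorphic square root; postcomposing with a further disk automorphism to normalize the value at $z_0$ to $0$ yields a new competitor in $\mathcal{F}$, and a computation with the Schwarz lemma applied to the squaring map shows its derivative at $z_0$ strictly exceeds $\abs{f'(z_0)}$, contradicting extremality. Hence $f_\Omega := f$ is a conformal bijection onto $\mathbb{D}$, and $T = f_{\Omega_1}^{-1}\circ f_{\Omega_0}$ is holomorphic with $T' \neq 0$, so $\det(DT) = \abs{T'}^2 > 0$ and $\alpha = \log\abs{T'}$ is well defined with the stated regularity. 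Everything except the square-root estimate in (iii) is routine; that single estimate is where the real content sits.
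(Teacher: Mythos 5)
The paper does not prove this statement at all: it is quoted as the classical Riemann Mapping Theorem with a citation to Schinzinger--Laura, so there is no in-paper argument to compare yours against. Your outline is the standard Koebe normal-families proof and is essentially correct: the reduction to the unit disk via composition and inversion, the square-root trick to show the competing family is nonempty, Montel plus Hurwitz for the extremal element, and the Koebe square-root/Schwarz-lemma estimate for surjectivity are exactly the right steps, and you correctly identify the last of these as the only nonroutine point. Two remarks. First, your gloss on ``sufficiently regular'' is the right one and is actually needed to match the paper's conventions: the paper defines a conformal map as a $k$-diffeomorphism whose conformal parameter $\alpha$ lies in $C^{k-1}(\Omegab_0)$, i.e. is controlled up to the boundary, and the interior Riemann mapping theorem alone does not deliver that --- one does need a boundary-regularity result of Kellogg--Warschawski type, so this hypothesis is carrying real weight rather than being decorative. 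Second, in step (ii) you should say a word on why $\sup_{f\in\mathcal{F}}\abs{f'(z_0)}$ is finite (Cauchy estimates, since the family maps into the bounded disk) and strictly positive (the family is nonempty and its members, being injective, have nonvanishing derivative); the latter is what rules out a constant limit before Hurwitz is invoked. These are standard details and do not affect the soundness of the proposal.
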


The Riemann Mapping Theorem signifies why it makes sense to use conformal shape deformations for two-dimensional shape problems. It shows that the conformal approach does not restrict the set of reachable shapes. The advantage of using this approach is that it enables us to reformulate problems on the reference domain with coefficients depending on the conformal parameter. To do this we proof the following result which shows for what conformal parameters a corresponding conformal map exists. The proof uses arguments for holomorphic functions (cf. \cite{fischer1980funktionentheorie}), therefore, we identify the conformal map with a complex function.
\begin{lemma}
\label{lemma:existence conformal map}
 Let $\Omega_0 \subset \R^2$ be simply connected and assume that $\alpha \in C^k(\Omegab_0)$, $k \geq 2$ is harmonic, i.e. $\Delta \alpha = 0$. Then, there exists a conformal map $T_\alpha: \Omega_0 \rightarrow \Omega_\alpha$ with conformal parameter $\alpha$. Furthermore, $T_\alpha$ is unique up to global translation and rotation of $\Omega_\alpha$.
\end{lemma}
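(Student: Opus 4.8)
The plan is to identify $\R^2$ with $\C$, so that a conformal map in the sense of the definition is precisely a holomorphic function $T_\alpha$ with nowhere-vanishing derivative. Writing $T_\alpha = (T_1,T_2) = u + iv$ and using the Cauchy--Riemann equations one gets $\det(DT_\alpha) = (\partial_1 u)^2 + (\partial_1 v)^2 = \abs{T_\alpha'}^2$, so requiring the conformal parameter to be $\alpha$ is the same as requiring $\abs{T_\alpha'} = e^{\alpha}$. I would realize this by writing $T_\alpha' = e^{g}$ for a holomorphic $g$ with $\Re g = \alpha$ and then integrating.

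The construction then has two steps. First, since $\alpha$ is harmonic on the simply connected domain $\Omega_0$, it has a harmonic conjugate: the $1$-form $-\partial_2\alpha\,dx_1 + \partial_1\alpha\,dx_2$ is closed precisely because $\Delta\alpha = 0$ (this is where $k \ge 2$ enters), hence exact on $\Omega_0$, and a primitive $\beta$ satisfies the Cauchy--Riemann equations together with $\alpha$, so $g := \alpha + i\beta$ is holomorphic on $\Omega_0$, of class $C^k(\Omegab_0)$, and unique up to an additive real constant. Second, $e^{g}$ is holomorphic and nowhere zero on the simply connected domain $\Omega_0$, hence has a holomorphic primitive $T_\alpha$, given path-independently by $T_\alpha(z) = \int_{z_0}^{z} e^{g(\zeta)}\,d\zeta$. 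Then $T_\alpha' = e^{g} \neq 0$, so $T_\alpha$ is a local conformal diffeomorphism, and $\det(DT_\alpha) = \abs{T_\alpha'}^2 = e^{2\Re g} = e^{2\alpha}$, i.e. its conformal parameter is $\alpha$. One sets $\Omega_\alpha := T_\alpha(\Omega_0)$.

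The step I expect to be the real obstacle is upgrading $T_\alpha$ from a \emph{local} to a \emph{global} diffeomorphism: the construction only yields a local biholomorphism, and global injectivity is genuinely extra information --- for instance $\alpha(x_1,x_2) = x_1$ is harmonic but gives $T_\alpha(z) = e^{z}$, which is not injective on a large disk. A careful argument therefore needs a mild restriction on $\alpha$ (or on $\Omega_0$) ensuring that the integrated map stays injective; in the setting relevant here one works near a fixed admissible reference geometry, where injectivity is stable, or one simply reads $\Omega_\alpha$ as the image domain.

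Uniqueness is the easy part. If $S$ and $T$ are two conformal maps on $\Omega_0$ with conformal parameter $\alpha$, both are holomorphic with $\abs{S'} = \abs{T'} = e^{\alpha} > 0$, so $h := S'/T'$ is holomorphic on the connected set $\Omega_0$ with $\abs{h} \equiv 1$; by the maximum modulus principle $h$ is a constant $e^{i\theta}$. Integrating $S' = e^{i\theta} T'$ gives $S = e^{i\theta} T + b$ with $b \in \C$ constant, which is exactly a rotation composed with a translation of $\Omega_\alpha$, as claimed.
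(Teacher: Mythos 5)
Your proof is correct and follows essentially the same route as the paper: construct a harmonic conjugate $\beta$ so that $g = \alpha + i\beta$ is holomorphic on the simply connected $\Omega_0$, integrate $e^{g}$ to obtain $T_\alpha$ with $\det(DT_\alpha) = \abs{e^{g}}^2 = e^{2\alpha}$, and account for the two free constants of integration as global translation and rotation. The injectivity issue you flag is genuine and is acknowledged by the paper in a remark immediately following the lemma (self-overlapping images are to be reinterpreted as suitably defined manifolds), and your maximum-modulus argument for uniqueness is actually more complete than the paper's ``by construction'' claim, which tacitly assumes every conformal map with parameter $\alpha$ arises from the given construction.
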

\begin{proof}
 We identify $\R^2$ with the complex plane $\C$. Because $\alpha$ is harmonic we know from \cite{henrici1993applied} that there exists a holomorphic function $g: \Omega_0 \rightarrow \C$ such that $\Re g = \alpha$. We write $g = \alpha + \imath \beta$ for some imaginary part $\beta$. We know that $\beta$ is uniquely determined up to a constant which we can fix by $\beta(z_0) = \beta_0 \in \R$ for a certain point $z_0 \in \Omega_0$. Then, $e^g$ is also holomorphic and thus there exists a unique holomorphic function $T_\alpha \in C^{k+1}(\Omega_0, \C)$ such that
 \begin{align}
 \begin{aligned}
  \partial_z T_\alpha &= e^g
  \\
  T_\alpha(z_0) &= y_0
 \end{aligned}
 \end{align}
for some $y_0 \in \C$. The value $T_\alpha(z_1)$ can be obtainted by integration over an arbitrary path from $z_0$ to $z_1 \in \Omega_0$. And since $\Omega_0$ is simply connected it is independent to the choice of the path (cf. \cite{fischer1980funktionentheorie}). On the one hand
\begin{align}
\begin{aligned}
 \partial_z T_\alpha \overline{\partial_z T_\alpha} &= e^g \overline{e^g}
 = \abs{e^g}^2 = \abs{e^{\alpha + \imath \beta}}^2 
 \\
 &= \abs{e^\alpha e^{\imath \beta}}^2
 = e^{2 \alpha}
\end{aligned}
\end{align}
and on the other hand when identifying $T_\alpha$ with the corresponding diffeomorphism we get
\begin{align}
 \begin{aligned}
  \partial_z T_\alpha \overline{\partial_z T_\alpha}
  = \det(D T_\alpha)
 \end{aligned}
\end{align}
which yields $\det(D T_\alpha) = e^{2\alpha}$. Defining $\Omega_\alpha = T_\alpha(\Omega_0)$, we have constructed a conformal diffeomorphism $T_\alpha: \Omega_0 \rightarrow \Omega_\alpha$ for the conformal parameter $\alpha$.

By construction the conformal map is unique up to the choice of constants $y_0 \in \C$ and $\beta_0 \in \R$. Then, $y_0$ can be used for global translations and $\beta_0$ induces a rotation of $\Omega_\alpha$ around the point $y_0$.
\end{proof}

We have seen that the conformal map for a given conformal parameter is unique up to global translation and rotation of the conformal domain $\Omega_\alpha$. This means that the shape is unique and just the embedding into the $\R^2$ plane is undetermined. We take equivalence classes to get a unique relation between conformal parameter and corresponding conformal domain. The problem under consideration is well-defined with respect to these equivalence classes.

\begin{remark}
Note that it may happen that for certain conformal parameters the conformal map constructed in Lemma \ref{lemma:existence conformal map} produces a self-overlapping domain which cannot be embedded into the two-dimensional plane. In this case $T_\alpha$ would not be bijective. But we could restore the bijectivity by interpreting $\Omega_\alpha$ as a suitable defined manifold.
\end{remark}

\section{Stokes Flow}
\label{sec:stokes flow}
Let $\Omega_0 \subset \R^2$ be a simply connected bounded reference domain of class $C^{4,1}$. Let the boundary $\Gamma_0$ decompose into the in- and outflow parts $\Gamma_0^{in}$ and the wall parts $\Gamma_0^w$. We consider an optimization problem with supremum norm cost functional based on the Stokes flow together with conformal shape variations. Let the admissible set of conformal parameters 
\begin{align}
\alphaspace \subset \{ \alpha \in H^4(\Omega_0); \Delta \alpha = 0 \}
\end{align}
be given. 
\begin{remark}
Further constraints on $\alphaspace$ are reasonable to preserve certain features of the geometry, like the shape of the inflow boundaries. In Section \ref{sec:inflow constraints} we discuss this subject further and introduce specific choices for $\alphaspace$, which we examine with the help of numerical examples in Section \ref{sec:numerical results}.
\end{remark}

Since $H^4(\Omega_0)$ embeds into $C^2(\Omegab_0)$, Lemma \ref{lemma:existence conformal map} yields the existence of the conformal map $T_\alpha: \Omega_0 \rightarrow \Omega_\alpha$ corresponding to $\alpha \in \alphaspace$. Therefore, we can define the set of admissible shapes by
\begin{align}
 \admshape = \{ \Omega_\alpha = T_\alpha(\Omega_0); \alpha \in \alphaspace \}.
\end{align}
Furthermore, we define
\begin{align}
\begin{aligned}
 \Gamma_\alpha^{in} &= T_\alpha(\Gamma_0^{in})
 \\
 \Gamma_\alpha^{w} &= T_\alpha(\Gamma_0^{w})
\end{aligned}
\end{align}
for $\Omega_\alpha \in \admshape$.

Let $g_0 \in H^\frac{7}{2}(\Gamma_0)$ be a given reference inflow condition with 
\begin{align}
\label{eq:d_tv g_0 = 0}
\partial_s g_0 = 0 \qquad \mbox{on $\Gamma_0^w$},
\end{align}
 where $\partial_s$ denotes the tangential derivative at the boundary. This means, as we see in \eqref{eq:no flow through wall}, that there is no flow through the wall boundaries. For every $\Omega_\alpha \in \admshape$ we define 
\begin{align}
 g_\alpha := g_0 \circ B_\alpha^{-1}: \Gamma_\alpha \rightarrow \R
\end{align}
where $B_\alpha: \Gamma_0 \rightarrow \Gamma_\alpha$ is a diffeomorphism to be defined later. Let $(\Psi(\alpha), \omega(\alpha)) \in H^4(\Omega_\alpha) \times H^2(\Omega_\alpha)$ be the unique solution of
\begin{align}
\label{eq:state bhstokes 1}
\begin{aligned}
 \Delta \Psi(\alpha) &= -\omega(\alpha) \qquad &&\mbox{in $\Omega_\alpha$}
 \\
 \Delta \omega(\alpha) &= 0 \qquad &&\mbox{in $\Omega_\alpha$}
 \\
 \Psi(\alpha) &= g_\alpha = g_0 \circ B_\alpha^{-1} \qquad &&\mbox{on $\Gamma_\alpha$}
 \\
 \partial_\nv \Psi(\alpha) &= 0 \qquad &&\mbox{on $\Gamma_\alpha$}
 \end{aligned}
\end{align}
where $\Psi(\alpha)$ and $\omega(\alpha)$ are called stream function and vorticity. By defining the flow velocity
\begin{align}
 \n{u}(\alpha) = 
 \begin{pmatrix} \partial_2 \Psi(\alpha) \\ -\partial_1 \Psi(\alpha) \end{pmatrix}
\end{align}
this biharmonic problem is equivalent to the incompressible Stokes problem (cf. \cite{anderson1995computational}). The boundary conditions lead to the following velocity conditions: On the boundary
\begin{align}
\begin{aligned}
\label{eq:no flow through wall}
 \nv \cdot \n{u}(\alpha) &= \partial_s \Psi(\alpha) = \partial_s g_\alpha
 \qquad \mbox{on $\Gamma_\alpha$}
\end{aligned}
\end{align}
which vanishes on $\Gamma_\alpha^w$ due to \eqref{eq:d_tv g_0 = 0} and
\begin{align}
 \tv \cdot \n{u} = -\partial_\nv \Psi = 0
 \qquad \mbox{on $\Gamma_\alpha$}.
\end{align}
Furthermore the wall shear stress which we denote by $\sigma$ is equal to the vorticity evaluated on the wall boundaries
\begin{align}
 \sigma(\alpha) = \omega(\alpha)|_{\Gamma_\alpha^w}.
\end{align}

The following lemma states the existence and regularity of the solution of \eqref{eq:state bhstokes 1} and provides the equivalent pull-back formulation on the reference domain.
\begin{lemma}
\label{lemma:linf:regularity}
 Let $\alpha \in \alphaspace \subset \{ \alpha \in H^4(\Omega_0); \Delta \alpha = 0 \}$ and define $\Psit(\alpha) := \Psi(\alpha) \circ T_\alpha$ and $\omegat(\alpha) := \omega(\alpha) \circ T_\alpha$. Then \eqref{eq:state bhstokes 1} is equivalent to
 \begin{align}
\label{eq:state bhstokes 2}
\begin{aligned}
 \Delta \Psit(\alpha) &= -e^{2\alpha} \omegat(\alpha) \qquad &&\mbox{in $\Omega_0$}
 \\
 \Delta \omegat(\alpha) &= 0 \qquad &&\mbox{in $\Omega_0$}
 \\
 \Psit(\alpha) &= g_0 \circ B_\alpha^{-1} \circ T_\alpha \qquad &&\mbox{on $\Gamma_0$}
 \\
 \partial_\nv \Psit(\alpha) &= 0 \qquad &&\mbox{on $\Gamma_0$}.
 \end{aligned}
\end{align}
Assume that $B_\alpha = T_\alpha$ or $B_\alpha$ is sufficiently regular. Then the solution is unique and $(\Psit(\alpha), \omegat(\alpha)) \in H^4(\Omega_0) \times H^2(\Omega_0)$ and especially $\omegat(\alpha)|_{\Gamma_0^w} \in C^0(\Gamma_0^w)$.
\end{lemma}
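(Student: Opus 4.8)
The plan is to treat the statement in three steps: first, a conformal change of variables turning \eqref{eq:state bhstokes 1} into \eqref{eq:state bhstokes 2}; second, existence, uniqueness and $H^4\times H^2$ regularity of the solution; third, the pointwise regularity of the wall vorticity. For the first step I would identify $\R^2$ with $\C$ and exploit that $T_\alpha$ is holomorphic with $\abs{\partial_z T_\alpha}^2 = e^{2\alpha}$, which was already shown in the proof of Lemma \ref{lemma:existence conformal map}. The chain rule for holomorphic maps then gives the transformation rule
\begin{align}
\label{eq:conf Laplace}
 \Delta(v \circ T_\alpha) = \abs{\partial_z T_\alpha}^2 \, (\Delta v)\circ T_\alpha = e^{2\alpha}\,(\Delta v)\circ T_\alpha \qquad \mbox{in $\Omega_0$}
\end{align}
for sufficiently regular $v$ on $\Omega_\alpha$. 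Applying \eqref{eq:conf Laplace} to $v = \Psi(\alpha)$ and to $v = \omega(\alpha)$ produces the first two equations of \eqref{eq:state bhstokes 2}, while the Dirichlet condition transforms directly into $\Psit(\alpha) = g_0\circ B_\alpha^{-1}\circ T_\alpha$ on $\Gamma_0$. For the Neumann condition I would use that the Cauchy--Riemann equations force $DT_\alpha$ to be a positive multiple of a rotation, so that $T_\alpha$ carries the unit normal of $\Gamma_0$ to the unit normal of $\Gamma_\alpha$ and $\partial_\nv(v\circ T_\alpha) = e^{\alpha}(\partial_\nv v)\circ T_\alpha$ on $\Gamma_0$; since $e^{\alpha}>0$ this makes $\partial_\nv\Psi(\alpha)=0$ equivalent to $\partial_\nv\Psit(\alpha)=0$. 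As $T_\alpha$ is a diffeomorphism the whole computation is reversible, so the two systems are equivalent and uniqueness transfers between them.

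For the second step I would eliminate $\omega$ and read \eqref{eq:state bhstokes 1} as the biharmonic Dirichlet problem $\Delta^2\Psi(\alpha)=0$ in $\Omega_\alpha$ with $\Psi(\alpha)=g_\alpha$, $\partial_\nv\Psi(\alpha)=0$ on $\Gamma_\alpha$. Assuming $B_\alpha=T_\alpha$, or $B_\alpha$ sufficiently regular, the datum $g_\alpha=g_0\circ B_\alpha^{-1}$ lies in $H^{7/2}(\Gamma_\alpha)$; subtracting an $H^4$-lift reduces to homogeneous data, and Lax--Milgram applied to $(u,v)\mapsto\int_{\Omega_\alpha}\Delta u\,\Delta v$ on $H^2_0(\Omega_\alpha)$ yields a unique weak solution in $H^2(\Omega_\alpha)$. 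Since $\alpha\in H^4(\Omega_0)$ forces $T_\alpha\in H^5(\Omega_0)$ --- its complex derivative $e^{g}$, $g=\alpha+\imath\beta$, lies in the Banach algebra $H^4(\Omega_0)$ --- and $\Omega_0$ is of class $C^{4,1}$, the domain $\Omega_\alpha$ is regular enough that elliptic regularity for the biharmonic operator upgrades this to $\Psi(\alpha)\in H^4(\Omega_\alpha)$, whence $\omega(\alpha)=-\Delta\Psi(\alpha)\in H^2(\Omega_\alpha)$; composition with the $H^5$-regular diffeomorphism $T_\alpha$ then keeps $(\Psit(\alpha),\omegat(\alpha))$ in $H^4(\Omega_0)\times H^2(\Omega_0)$. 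The third step is then short: by the trace theorem $\omegat(\alpha)|_{\Gamma_0^w}\in H^{3/2}(\Gamma_0^w)$, and in one space dimension $H^{3/2}(\Gamma_0^w)\embed C^0(\Gamma_0^w)$, which gives the claim.

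The main obstacle I anticipate is the regularity bookkeeping in the second step: $\alpha$ is only $H^4$, so a priori $\Gamma_\alpha$ is merely $C^{3,\mu}$ with $\mu<1$, uncomfortably close to the threshold for $H^4$ biharmonic regularity, and one must check that composing with $T_\alpha$ does not cost a derivative when moving between \eqref{eq:state bhstokes 1} on $\Omega_\alpha$ and \eqref{eq:state bhstokes 2} on $\Omega_0$. The clean way around this is to run the variational argument and the elliptic regularity estimate directly for \eqref{eq:state bhstokes 2} on the smooth reference domain $\Omega_0$, letting the coefficient $e^{2\alpha}\in H^4(\Omega_0)\embed C^2(\Omegab_0)$ carry all the limited smoothness; and the hypothesis ``$B_\alpha=T_\alpha$ or $B_\alpha$ sufficiently regular'' is exactly what keeps the transformed Dirichlet datum $g_0\circ B_\alpha^{-1}\circ T_\alpha$ in $H^{7/2}(\Gamma_0)$, as the $H^4$ estimate requires.
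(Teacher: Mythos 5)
Your proposal is correct and follows essentially the same route as the paper: the paper delegates the conformal equivalence of \eqref{eq:state bhstokes 1} and \eqref{eq:state bhstokes 2} to a citation of \cite{schinzinger2003conformal} (you derive it explicitly from $\abs{\partial_z T_\alpha}^2 = e^{2\alpha}$), and it then runs the variational and regularity argument directly on $\Omega_0$ via the weighted bilinear form $a(\psi,\phi)=\int_{\Omega_0} e^{-2\alpha}\Delta\psi\,\Delta\phi\dint x$ with $e^{-2\alpha}$ bounded away from zero and infinity, which is exactly the ``clean way'' you settle on after correctly identifying the regularity bookkeeping problem with working on $\Omega_\alpha$; the final continuity of $\omegat(\alpha)|_{\Gamma_0^w}$ comes from the Sobolev embedding in both arguments.
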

\begin{proof}
 Let $\alpha \in \alphaspace$. The equivalence of \eqref{eq:state bhstokes 1} and \eqref{eq:state bhstokes 2} follows from \cite{schinzinger2003conformal}. Define the bilinear form
 \begin{align}
  a(\psi, \phi) := \int_{\Omega_0} e^{-2 \alpha} \Delta \psi \, \Delta \phi \dint x.
 \end{align}
We have $\alpha \in C^2(\Omegab_0)$, therefore, $e^{-2\alpha}$ is positive and bounded away from zero and infinity. Then, standard existence and regularity theory (cf. \cite{wloka1987partial}) yields the existence of a unique solution $(\Psit(\alpha), \omegat(\alpha)) \in H^4(\Omega_0) \times H^2(\Omega_0)$. Finally, $\omegat(\alpha)|_{\Gamma_0^w} \in C^0(\Gamma_0^w)$ follows from the Lemma of Sobolev (see \cite{wloka1987partial}).
\end{proof}

\subsection{Push-Forward of the Inflow Condition}
It remains to define the map $B_\alpha: \Gamma_0 \rightarrow \Gamma_\alpha$ which is used to push-forward the inflow condition. We consider two choices: The simplest way is to use the conformal map itself, i.e.
\begin{align}
\label{eq:state:ConformalMapping}
\tag{$\mathcal{B}1$}
 B_\alpha := T_\alpha.
\end{align}
Unfortunately, this can alter the velocity inflow condition, i.e. $\nv \cdot \n{u}(0) = B_\alpha^*(\nv \cdot \n{u}(\alpha))$ would not hold. To see this remember that the velocity corresponding to the stream function $\Psi(\alpha)$ is
\begin{align}
 \n{u}(\alpha) = 
 \begin{pmatrix} \partial_2 \Psi(\alpha) \\ -\partial_1 \Psi(\alpha) \end{pmatrix}.
\end{align}
Then,
\begin{align}
\label{eq:velocity mapping}
\begin{aligned}
 \nv \cdot \n{u}(0) &= \partial_s \Psi(0) = \partial_s g_0
 = \partial_s (\Psi(\alpha) \circ B_\alpha)
 = B_\alpha^*(\partial_s \Psi(\alpha)) \, \partial_s B_\alpha
 \\
 &= B_\alpha^*(\nv \cdot \n{u}(\alpha)) \, \partial_s B_\alpha
\end{aligned}
\end{align}
and for \eqref{eq:state:ConformalMapping}, $\partial_s B_\alpha = \partial_s T_\alpha \neq 1$ in general.

To fulfill $\nv \cdot \n{u}(0) = B_\alpha^*(\nv \cdot \n{u}(\alpha))$ we define $I_\alpha: \Gamma_0 \rightarrow \Gamma_\alpha$ such that
\begin{align}
\begin{aligned}
 &I_\alpha|_{\Gamma_0^{in}}: \Gamma_0^{in} \rightarrow \Gamma_\alpha^{in}  \quad \mbox{is isometric}
 \\
 &I_\alpha|_{\Gamma_0^w} = T_\alpha|_{\Gamma_0^w}.
\end{aligned}
\end{align}
Of course such a map can only exist if the corresponding inflow parts have the same length, i.e. if for every $\Omega_\alpha \in \admshape$
\begin{align}
\label{state:eq:isometry assumption}
\int_{\Gamma_0^k} \dint s = \int_{T_\alpha(\Gamma_0^k)} \dint s
\qquad \mbox{for every inflow part $\Gamma_0^k \in \mathcal{C}(\Gamma_0^{in})$},
\end{align}
where the set of connected components of the inflow boundary is defined by
\begin{align}
 \mathcal{C}(\Gamma_0^{in}) := \{ \Gamma_0^k \subset \Gamma_0^{in};\; \mbox{$\Gamma_0^k$ is connected component of $\Gamma_0^{in}$} \}.
\end{align}
Then we can use
\begin{align}
 \label{eq:state:IsometricMapping}
 \tag{$\mathcal{B}2$}
 B_\alpha := I_\alpha
\end{align}
to push-forward the boundary condition.

\begin{lemma}
 If $B_\alpha = I_\alpha$, then for every $\Omega_\alpha \in \admshape$ which fulfills \eqref{state:eq:isometry assumption}, $\nv \cdot \n{u}(0) = B_\alpha^*(\nv \cdot \n{u}(\alpha))$ holds and therewith we get a realistic mapping of the inflow condition.
\end{lemma}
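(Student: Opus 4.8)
The plan is to verify the claimed identity $\nv \cdot \n{u}(0) = B_\alpha^*(\nv \cdot \n{u}(\alpha))$ by tracing through the computation already laid out in \eqref{eq:velocity mapping} and observing that the defining property of $I_\alpha$ makes the extra factor $\partial_s B_\alpha$ equal to one. From \eqref{eq:velocity mapping} we have, for any choice of $B_\alpha$,
\begin{align*}
 \nv \cdot \n{u}(0) = B_\alpha^*(\nv \cdot \n{u}(\alpha)) \, \partial_s B_\alpha
 \qquad \mbox{on $\Gamma_0$},
\end{align*}
so the whole claim reduces to showing $\partial_s B_\alpha = 1$ on $\Gamma_0$ when $B_\alpha = I_\alpha$. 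Here $\partial_s B_\alpha$ denotes the derivative of arclength along $\Gamma_\alpha$ with respect to arclength along $\Gamma_0$ under the map $B_\alpha$, i.e. the tangential Jacobian.

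First I would treat the two pieces of the boundary separately, using the decomposition $\Gamma_0 = \Gamma_0^{in} \cupdot \Gamma_0^w$. On each connected inflow component $\Gamma_0^k$, the map $I_\alpha|_{\Gamma_0^k}$ is an isometry by construction, so by definition of isometry it preserves arclength, giving $\partial_s I_\alpha = 1$ there; this is exactly where the length-matching assumption \eqref{state:eq:isometry assumption} is needed, since an isometry onto $T_\alpha(\Gamma_0^k)$ can only exist if the two curves have equal length. On $\Gamma_0^w$ we have $I_\alpha|_{\Gamma_0^w} = T_\alpha|_{\Gamma_0^w}$, but here the relevant observation is that $\n{u}(\alpha)$ already has vanishing normal component on $\Gamma_\alpha^w$ by \eqref{eq:no flow through wall} (because $\partial_s g_0 = 0$ on $\Gamma_0^w$ by \eqref{eq:d_tv g_0 = 0}), and likewise $\nv \cdot \n{u}(0) = \partial_s g_0 = 0$ on $\Gamma_0^w$; so both sides of the claimed identity are zero on the wall regardless of the value of $\partial_s T_\alpha$, and the identity holds trivially there. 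Combining the inflow and wall parts yields the identity on all of $\Gamma_0$.

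I do not expect any serious obstacle here — the lemma is essentially a bookkeeping consequence of the design of $I_\alpha$. The one point that deserves care is making the meaning of the pull-back $B_\alpha^*$ and the symbol $\partial_s B_\alpha$ precise enough that the chain-rule step in \eqref{eq:velocity mapping} is rigorous: one should state that for a curve diffeomorphism $B_\alpha$, $\partial_s(f \circ B_\alpha) = (\partial_s f)\circ B_\alpha \cdot \partial_s B_\alpha$ where $\partial_s B_\alpha$ is the ratio of the two arclength elements, and that $\partial_s B_\alpha \equiv 1$ is equivalent to $B_\alpha$ being a local isometry of the boundary curves. Once that is pinned down, the proof is just the two-case split above, and the mild subtlety is simply remembering that on the wall part we get the conclusion from the vanishing of the normal velocity rather than from any isometry property of $T_\alpha$.
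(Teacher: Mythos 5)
Your argument is correct and is essentially identical to the paper's own proof: both start from \eqref{eq:velocity mapping}, use $\partial_s I_\alpha = 1$ on $\Gamma_0^{in}$ (which is where \eqref{state:eq:isometry assumption} enters), and use the vanishing of the normal velocity $\nv \cdot \n{u}(\alpha) = 0$ on the wall part to conclude there. The extra care you take in pinning down the meaning of $\partial_s B_\alpha$ is a reasonable elaboration but not a different route.
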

\begin{proof}
We have
\begin{align}
\begin{aligned}
 \nv \cdot \n{u}(0) = I_\alpha^*(\nv \cdot \n{u}(\alpha)) \, \partial_s I_\alpha
\end{aligned}
\end{align}
by \eqref{eq:velocity mapping}. Then, by definition $\partial_s I_\alpha = 1$ on $\Gamma_0^{in}$ and $\nv \cdot \n{u}(\alpha) = 0$ on $\Gamma_0^w$ which yields the result.
\end{proof}

Using \eqref{eq:state:IsometricMapping} has the advantage that the inflow condition is mapped isometrically. However, this does increase the complexity of the optimization problem, because the isometry must be computed which adds additional nonlinearities, as we are going to see in Section \ref{sec:inflow constraints}.

\section{Optimization Problem}
\label{sec:linf:optimization problem}

We can now formulate the shape optimization problem with supremum norm cost functional. Let $\sigma_d \in C^0(\Gamma_0^w)$ be a given target wall shear stress and let $\varepsilon \geq 0$. We want to solve the problem
\begin{align}
\label{eq:state:optsystem1}
\begin{aligned}
   \minimize_{(\Omega_\alpha, \Psi, \omega) \in M_1}
   &\quad \vectornorm{\sigma_d - T_\alpha^*\omega}_{C^0(\Gamma_0^w)} 
   + \varepsilon \vectornorm{\alpha}_{H^4(\Omega_0)}^2
   \\
   \mbox{with} \quad M_1 &= \admshape \times H^2(\Omega_\alpha) \times H^2(\Omega_\alpha)
   \\
   \mbox{subject to}\quad \Delta \Psi &= -\omega \qquad &&\mbox{in $\Omega_\alpha$}
   \\
   \quad \Delta \omega &= 0 \qquad &&\mbox{in $\Omega_\alpha$}
   \\
   \Psi &= g_0 \circ B_\alpha^{-1} \qquad &&\mbox{on $\Gamma_\alpha$}
   \\
   \partial_\nv \Psi &= 0 \qquad &&\mbox{on $\Gamma_\alpha$}.
\end{aligned}
\end{align}
Thus the task is to minimize the supremum norm distance between wall shear stress $\sigma = \omega|_{\Gamma_\alpha^w}$ and target wall shear stress.

\begin{remark}
 Note that the high regularity of the control space $\alphaspace \subset H^4(\Omega_0)$ and regularization term is necessary to assure that $\omega \in C^0(\Omegab)$ holds (cf. Lemma \ref{lemma:linf:regularity}) and thus that the optimal control problem is well-defined. On the other hand, if we would use an $L^2$-cost functional instead of a $C^0$-functional, we would require less regularity.
\end{remark}

\subsection{Shape Problem on Reference Domain}
  We eliminate the shape-dependence by applying the conformal pull-back operator $T_\alpha^*$ to the whole system. Using Lemma \ref{lemma:linf:regularity} yields a new optimization problem on the reference domain $\Omega_0$ which is equivalent to \eqref{eq:state:optsystem1}. Instead of $\Omega_\alpha \in \admshape$ the conformal parameter $\alpha \in \alphaspace$ acts as the control.
  \begin{align}
\label{eq:state:optsystem2}
\begin{aligned}
   \minimize_{(\alpha, \Psi, \omega) \in M_2}
   &\quad \vectornorm{\sigma_d - \omega}_{C^0(\Gamma_0^w)} 
   + \varepsilon \vectornorm{\alpha}_{H^4(\Omega_0)}^2
   \\
   \mbox{with} \quad M_1 &= \alphaspace \times H^2(\Omega_0) \times H^2(\Omega_0)
   \\
   \mbox{subject to}\quad \Delta \Psi &= -e^{2\alpha}\omega \qquad &&\mbox{in $\Omega_0$}
   \\
   \quad \Delta \omega &= 0 \qquad &&\mbox{in $\Omega_0$}
   \\
   \Psi &= g_0 \circ B_\alpha^{-1} \circ T_\alpha \qquad &&\mbox{on $\Gamma_0$}
   \\
   \partial_\nv \Psi &= 0 \qquad &&\mbox{on $\Gamma_0$}.
\end{aligned}
\end{align}

\subsection{Shape Problem with State Constraints}
In order to eliminate the supremum norm from the cost functional, we use a standard technique (e.g.. \cite{grund2001optimal}) and replace it by a scalar variable $\delta \in \R$ together with additional inequality constraints which make sure that the distance between $\omega$ and $\sigma_d$ does not grow bigger than $\delta$. This yields
  \begin{subequations}
  \label{eq:state:OptSystemConformal}
  \begin{align}
  \label{eq:state:OptSystemConformal a}
   \mbox{minimize}_{(\delta, \alpha, \Psi, \omega) \in M_3}&\quad \delta
   +\varepsilon \vectornorm{\alpha}^2_{H^4(\Omega_0)}
   \\
   \label{eq:state:OptSystemConformal b}
   \mbox{with} \quad M_3 &= \R \times \alphaspace \times H^2(\Omega_0) \times H^2(\Omega_0)
   \\
   \label{eq:state:OptSystemConformal c}
   \mbox{subject to}\quad \Delta \Psi &= -e^{2\alpha}\omega \qquad &&\mbox{in $\Omega_0$}
   \\
   \label{eq:state:OptSystemConformal d}
   \quad \Delta \omega &= 0 \qquad &&\mbox{in $\Omega_0$}
   \\
   \label{eq:state:OptSystemConformal e}
   \Psi &= g_0 \circ B_\alpha^{-1} \circ T_\alpha \qquad &&\mbox{on $\Gamma_0$}
   \\
   \label{eq:state:OptSystemConformal f}
   \partial_\nv \Psi &= 0 \qquad &&\mbox{on $\Gamma_0$}
   \\
   \label{eq:state:OptSystemConformal g}
   \sigma_d - \omega &\leq \delta \qquad &&\mbox{on $\Gamma_0^w$}
   \\
   \label{eq:state:OptSystemConformal h}
   -\sigma_d + \omega &\leq \delta \qquad &&\mbox{on $\Gamma_0^w$}.
  \end{align}
  \end{subequations}
  This is a nonlinear optimal control problem given on a fixed domain and we can use established methods to compute the solution. All geometric information is hidden in the conformal parameter $\alpha \in \alphaspace$ and the optimal shape can be recovered later after the optimal $\alpha$ has been computed (see Section \ref{state:sec:reconstruction}). Note that in the case \eqref{eq:state:ConformalMapping} the Dirichlet condition simplifies to $\Psi = g_0$ on $\Gamma_0$.

\section{Existence of an Optimal Control}
\label{sec:ex optimal control}
To analyze the minimization problem and to form a basis for its numerical treatment we show the existence of an optimal control. The proof is straightforward and the idea can be found in \cite{hinze2009optimization}. For simplicity we only consider the conformal case \eqref{eq:state:ConformalMapping}, since the isometric case \eqref{eq:state:IsometricMapping} would involve some regularity issues at the intersection of the inflow and wall boundaries. We use
\begin{align}
\label{eq:ex min alphaspace}
 \alphaspace := \{ \alpha \in H^4(\Omega_0); \Delta \alpha = 0 \}
\end{align}
as the control space. Then Lemma \ref{lemma:linf:regularity} yields the existence of a unique state $(\Psi(\alpha), \omega(\alpha)) \in H^4(\Omega_0) \times H^2(\Omega_0)$ and Lemma \ref{lemma:existence conformal map} assures the existence of a corresponding conformal domain. Later in Section \ref{sec:inflow constraints}, where we start to deal with the application, we redefine $\alphaspace$ and add additional constraints in order to preserve certain features of the inflow boundaries. However, to keep the existence proof simple we use \eqref{eq:ex min alphaspace} for now.

\begin{theorem}
 Let $\alphaspace := \{ \alpha \in H^4(\Omega_0); \Delta \alpha = 0 \}$, let $\varepsilon > 0$ and let $B_\alpha = T_\alpha$. Then there exists an optimal control for the optimization problem \eqref{eq:state:optsystem2}.
\end{theorem}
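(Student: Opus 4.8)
The plan is to use the direct method of the calculus of variations. First I would observe that the feasible set is nonempty (e.g. $\alpha \equiv 0$ is harmonic and in $H^4(\Omega_0)$, and Lemma \ref{lemma:linf:regularity} provides the corresponding state), so the infimum
\[
 m := \inf \Bigl\{ \vectornorm{\sigma_d - \omega}_{C^0(\Gamma_0^w)} + \varepsilon \vectornorm{\alpha}_{H^4(\Omega_0)}^2 \Bigr\}
\]
over feasible triples $(\alpha,\Psi,\omega)$ is a finite nonnegative number. Pick a minimizing sequence $(\alpha_n, \Psi_n, \omega_n)$. Since the cost is bounded along the sequence and $\varepsilon > 0$, the regularization term forces $\vectornorm{\alpha_n}_{H^4(\Omega_0)}$ to be bounded; hence, after passing to a subsequence, $\alpha_n \rightharpoonup \bar\alpha$ weakly in $H^4(\Omega_0)$. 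The set $\alphaspace = \{\alpha \in H^4(\Omega_0) : \Delta\alpha = 0\}$ is a closed linear subspace, hence weakly closed, so $\bar\alpha \in \alphaspace$; in particular $\bar\alpha$ is harmonic and Lemma \ref{lemma:existence conformal map} applies.

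Next I would pass to the limit in the state equation. By the compact embedding $H^4(\Omega_0) \embed C^2(\Omegab_0)$ (which the paper already invokes), $\alpha_n \to \bar\alpha$ strongly in $C^2(\Omegab_0)$, so $e^{2\alpha_n} \to e^{2\bar\alpha}$ uniformly, and likewise the boundary data $g_0 \circ B_{\alpha_n}^{-1}\circ T_{\alpha_n}$ — which in the conformal case $B_\alpha = T_\alpha$ reduces simply to $g_0$ — converges appropriately (indeed it is just $g_0$, independent of $n$). From Lemma \ref{lemma:linf:regularity}, $(\Psi_n,\omega_n)$ is the unique solution of \eqref{eq:state bhstokes 2} and the a priori estimate in that lemma, together with the uniform bound on $e^{-2\alpha_n}$ away from zero and infinity, gives a uniform bound $\vectornorm{\Psi_n}_{H^4(\Omega_0)} + \vectornorm{\omega_n}_{H^2(\Omega_0)} \le C$. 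Extract weakly convergent subsequences $\Psi_n \rightharpoonup \bar\Psi$ in $H^4(\Omega_0)$ and $\omega_n \rightharpoonup \bar\omega$ in $H^2(\Omega_0)$. The linear equations $\Delta\omega_n = 0$, $\partial_\nv \Psi_n = 0$ and the Dirichlet condition pass to the limit under weak convergence; for the coupling $\Delta\Psi_n = -e^{2\alpha_n}\omega_n$ I combine the weak convergence of $\Delta\Psi_n$ with the strong (uniform) convergence of $e^{2\alpha_n}$ and, say, the strong $L^2$ convergence of $\omega_n$ (from compact embedding $H^2 \embed L^2$) to identify the product limit. Hence $(\bar\alpha,\bar\Psi,\bar\omega)$ is feasible for \eqref{eq:state:optsystem2}.

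Finally I would show the limit triple attains $m$. The regularization term $\alpha \mapsto \varepsilon\vectornorm{\alpha}_{H^4(\Omega_0)}^2$ is convex and strongly continuous, hence weakly lower semicontinuous, so $\varepsilon\vectornorm{\bar\alpha}^2 \le \liminf \varepsilon\vectornorm{\alpha_n}^2$. For the supremum-norm term I use that $\omega_n|_{\Gamma_0^w} \to \bar\omega|_{\Gamma_0^w}$ in $C^0(\Gamma_0^w)$: this follows because $\omega_n \rightharpoonup \bar\omega$ in $H^2(\Omega_0)$, the trace operator onto $\Gamma_0^w$ maps $H^2(\Omega_0)$ compactly into $C^0(\Gamma_0^w)$ (again the Sobolev embedding used in Lemma \ref{lemma:linf:regularity}, now in compact form), so $\omega_n|_{\Gamma_0^w} \to \bar\omega|_{\Gamma_0^w}$ strongly in $C^0(\Gamma_0^w)$; therefore $\vectornorm{\sigma_d - \omega_n}_{C^0(\Gamma_0^w)} \to \vectornorm{\sigma_d - \bar\omega}_{C^0(\Gamma_0^w)}$, which is even continuity, not just lower semicontinuity. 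Adding the two contributions, $\vectornorm{\sigma_d - \bar\omega}_{C^0(\Gamma_0^w)} + \varepsilon\vectornorm{\bar\alpha}^2 \le \liminf (\cdots) = m$, and since $(\bar\alpha,\bar\Psi,\bar\omega)$ is feasible the reverse inequality holds trivially, so it is an optimal control.

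The step I expect to be most delicate is the passage to the limit in the nonlinear coupling $\Delta\Psi_n = -e^{2\alpha_n}\omega_n$ and the accompanying uniform a priori bound: one must make sure that the constants in the elliptic estimate of Lemma \ref{lemma:linf:regularity} depend on $\alpha_n$ only through $\vectornorm{e^{-2\alpha_n}}_{L^\infty}$ and the ellipticity constant, both of which are controlled uniformly by the $H^4$-bound via the compact embedding into $C^2$. If $B_\alpha \ne T_\alpha$ were allowed, handling the boundary term $g_0 \circ B_\alpha^{-1}\circ T_\alpha$ would add genuine difficulty, which is precisely why the statement restricts to $B_\alpha = T_\alpha$.
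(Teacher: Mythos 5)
Your proposal is correct and follows essentially the same route as the paper: the direct method with a minimizing sequence, coercivity of the $H^4$-regularization, weak closedness of $\alphaspace$, the compact embedding $H^4(\Omega_0)\embed C^2(\Omegab_0)$ to upgrade $\alpha_n$ to strong convergence, passage to the limit in the state equation, and weak lower semicontinuity plus strong convergence of the trace for the cost. The only cosmetic difference is that the paper first homogenizes the boundary data with a lift $\gt$ and works with the single fourth-order equation $\Delta e^{-2\alpha}\Delta\Psi=0$, whereas you keep the coupled $(\Psi,\omega)$ system; both yield the same uniform a priori bounds and limit identification.
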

\begin{proof}
 Lemma \ref{lemma:linf:regularity} yields the existence of a unique state $\Psi(\alpha) \in H^4(\Omega_0)$ for every control $\alpha \in \alphaspace$ and the state solves
 \begin{align}
 \label{eq:ex minimizer 1}
  \begin{aligned}
   \Delta e^{-2\alpha}\Delta \Psi(\alpha) &= 0 \qquad &&\mbox{in $\Omega_0$}
   \\
   \Psi(\alpha) &= g_0 \qquad &&\mbox{on $\Gamma_0$}
   \\
   \partial_\nv \Psi(\alpha) &= 0 \qquad &&\mbox{on $\Gamma_0$}.
  \end{aligned}
 \end{align}
 For $\Psi, \alpha \in H^4(\Omega_0)$ we define the cost functional 
 \begin{align}
  J(\Psi, \alpha) := \vectornorm{\sigma_d - \Delta \Psi}_{C^0(\Gamma_0^w)} 
   + \varepsilon \vectornorm{\alpha}_{H^4(\Omega_0)}^2.
 \end{align}
 
 Clearly, $\alphaspace$ is nonempty since $0 \in \alphaspace$. Furthermore, $J(\Psi, \alpha) \geq 0$ for all $(\Psi, \alpha) \in H^4(\Omega_0) \times \alphaspace$ and thus
 \begin{align}
  j := \inf_{\alpha \in \alphaspace} J(\Psi(\alpha), \alpha) \in \R^+_0
 \end{align}
 exists. We choose a minimizing sequence $(\Psi_n, \alpha_n) \in H^4(\Omega_0) \times \alphaspace$ such that
 \begin{align}
  \Psi_n = \Psi(\alpha_n) \qquad \mbox{and} \qquad J(\Psi_n, \alpha_n) \rightarrow j \quad \mbox{for} \quad n \rightarrow \infty.
 \end{align}
 Then, because of the regularization term and since $\varepsilon > 0$, there exists a constant $C > 0$ such that $\vectornorm{\alpha_n}_{H^4(\Omega_0)} \leq C$ and \cite{alt2006lineare} yields the existence of a weakly convergent subsequence of $\alpha_n$ which we again denote by $\alpha_n$, i.e.
 \begin{align}
  \alpha_n \rightharpoonup \bar{\alpha}, \quad n \rightarrow \infty
  \qquad \mbox{in $H^4(\Omega_0)$}
 \end{align}
 with $\bar{\alpha} \in H^4(\Omega_0)$. Because of the continuity of the Laplace operator $\alphaspace$ is closed and since it is also convex $\alphaspace$ is weakly closed (cf. \cite{alt2006lineare}) which yields $\bar{\alpha} \in \alphaspace$. Furthermore, the Lemma of Sobolev (cf. \cite{wloka1987partial}) yields a compact embedding which implies the strong convergence (cf. \cite{alt2006lineare})
 \begin{align}
 \label{eq:ex min conv alpha}
  \alpha_n \rightarrow \bar{\alpha}, \quad n \rightarrow \infty
  \qquad \mbox{in $C^2(\Omegab_0)$}.
 \end{align}
 
 Let $\gt \in H^4(\Omega_0)$ be an extension with $\gt|_{\Gamma_0} = g_0$ and $\partial_\nv \gt|_{\Gamma_0} = 0$ and define $\psi_n := \Psi_n - \gt \in H^4(\Omega_0) \cap H^2_0(\Omega_0)$. Then the standard existence and regularity theory (see \cite{wloka1987partial}) applied to the equation
 \begin{align}
  \Delta e^{-2\alpha_n}\Delta \psi_n = -\Delta (e^{-2\alpha_n}\Delta \gt) \qquad \mbox{in $\Omega_0$}
 \end{align}
 yields the estimate
 \begin{align}
  \vectornorm{\psi_n}_{H^4(\Omega_0)} \leq C\left(\vectornorm{e^{-2\alpha_n}}_{C^2(\Omegab_0)}\right) \vectornorm{\Delta (e^{-2\alpha_n}\Delta \gt)}_{L^2(\Omega_0)}.
 \end{align}
 Here $C(\vectornorm{e^{-2\alpha_n}}_{C^2(\Omegab_0)})$ is a constant depending on the coefficient $e^{-2\alpha_n}$ and this coefficient is bounded $0 < a_l \leq \vectornorm{e^{-2\alpha_n}}_{C^2(\Omegab_0)} \leq a_u$ since $\vectornorm{\alpha_n}_{H^4(\Omega_0)} \leq C$. Thus, $\vectornorm{\psi_n}_{H^4(\Omega_0)} \leq C_1$ is bounded with a constant $C_1 > 0$ independent of $n$. By \cite{alt2006lineare} there exists a weakly convergent subsequence of $\psi_n$
 \begin{align}
 \label{eq:ex min conv psi}
  \psi_n \rightharpoonup \bar{\psi}, \quad n \rightarrow \infty
  \qquad \mbox{in $H^4(\Omega_0) \cap H_0^2(\Omega_0)$}
 \end{align}
 with $\bar{\psi} \in H^4(\Omega_0) \cap H_0^2(\Omega_0)$. The Lemma of Sobolev yields a compact embedding which implies the strong convergence
 \begin{align}
 \label{eq:ex min strong conv psi}
  \Delta \psi_n \rightarrow \Delta \bar{\psi}, \quad n \rightarrow \infty
  \qquad \mbox{in $C^0(\Omegab_0)$}.
 \end{align}

 Since $\Psi_n$ and $\alpha_n$ solve \eqref{eq:ex minimizer 1} we know that $\psi_n$ and $\alpha_n$ solve the weak formulation
 \begin{align}
 \int_{\Omega_0} e^{-2\alpha_n} \Delta \psi_n \Delta \phi \dint x =-\int_{\Omega_0} \Delta (e^{-2\alpha_n} \Delta \gt) \phi \dint x
 \qquad \mbox{for all $\phi \in H^2_0(\Omega_0)$}.
\end{align}
Then \eqref{eq:ex min conv alpha} and \eqref{eq:ex min conv psi} are sufficient to pass to the limit and we conclude
\begin{align}
 \int_{\Omega_0} e^{-2\bar{\alpha}} \Delta \bar{\psi} \Delta \phi \dint x =-\int_{\Omega_0} \Delta (e^{-2\bar{\alpha}} \Delta \gt) \phi \dint x
 \qquad \mbox{for all $\phi \in H^2_0(\Omega_0)$}.
\end{align}
This shows that $\bar{\Psi} := \bar{\psi} + \gt$ and $\bar{\alpha}$ solve the state equation, i.e. $\bar{\Psi} = \Psi(\bar{\alpha})$.

Because of \eqref{eq:ex min strong conv psi} together with the continuity of the norm and since the $H^4(\Omega_0)$-norm is weakly lower semicontinuous (see \cite{alt2006lineare}) we conclude
\begin{align}
 J(\bar{\Psi}, \bar{\alpha}) \leq \liminf_{n \rightarrow \infty} J(\Psi_n, \alpha_n) = j
\end{align}
and thus $J(\bar{\Psi}, \bar{\alpha}) = j$.
\end{proof}

  \section{Set of Admissible Conformal Parameters}
  \label{sec:inflow constraints}
  In this section we want to redefine the set of admissible conformal parameters in a way that it is suitable for the application. An important property of the conformal parameter is that its influence is global. So if we change it in a small region the corresponding conformal domain changes everywhere. Or if we move the wall boundaries to change the wall shear stress, the inflow boundaries are moved as well. However, from the applications point of view one usually wants to keep the inflow boundaries fixed and only change the wall boundaries. Therefore, let us deal with the question of how certain features of the inflow boundaries, like length or curvature, can be preserved by applying constraints to the conformal parameter. Basically we do this by restricting the conformal parameter $\alpha \in \alphaspace$ by either homogeneous Dirichlet or Neumann boundary conditions. However, note that only apply these boundary conditions on the inflow boundaries and not on the wall boundaries. 
  
  In the following we define three different choices for the conformal parameter set $\alphaspace$ and discuss the effects: On the one hand we can use a Dirichlet condition
  \begin{align}
  \tag{$\mathcal{I}1$}
   \label{inflowbd_length}
   \alphaspace := \{\alpha \in H^4(\Omega_0);\; \Delta \alpha = 0; \;
   \alpha|_{\Gamma_0^{in}} = 0 \}
  \end{align}
  which assures that the boundary of the reference domain is mapped isometrically to the corresponding conformal boundary. Especially, this makes sure that the total length of the inflow boundaries is preserved. Unfortunately, this does not fix the curvature and thus a formerly straight inflow can bend. For this setup $T_\alpha = I_\alpha$ holds, so \eqref{eq:state:ConformalMapping} and \eqref{eq:state:IsometricMapping} are equivalent. The effects of \eqref{inflowbd_length} are illustrated in Figure \ref{fig:state:Example 1}.
  
  On the other hand we can use a Neumann condition
  \begin{align}
  \tag{$\mathcal{I}2$}
  \label{inflowbd_curvature}
  \alphaspace := \{\alpha \in H^4(\Omega_0);\; \Delta \alpha = 0; \;
  \partial_\nv \alpha|_{\Gamma_0^{in}} = 0 \}
  \end{align}
  which preserves the curvature. Thus, bending of the inflow boundaries is not possible, but now its length can change. Thus, Condition \eqref{state:eq:isometry assumption} does not hold and we can only use \eqref{eq:state:ConformalMapping}. See Figure \ref{fig:state:Example 2} for an example.
  
  Naturally we would like to be able to preserve curvature and length at the same time. This is possible by combining the Neumann constraint with an additional integral constraint
  \begin{align}
  \tag{$\mathcal{I}3$}
  \label{inflowbd_lengthcurvature}
  \begin{aligned}
  \alphaspace := \Bigg\{ &\alpha \in H^4(\Omega_0);\; \Delta \alpha = 0; \;
   \partial_\nv \alpha|_{\Gamma_0^{in}} = 0;\; \dots
   \\
   &\int_{\Gamma_0^k} e^{\alpha} \dint s = \int_{\Gamma_0^k} 1 \dint s
   \mbox{ for all $\Gamma_0^k \in \mathcal{C}(\Gamma_0^{in})$} \Bigg\}.
   \end{aligned}
  \end{align}
  The integral constraint makes sure that for every $\Omega_\alpha \in \admshape$ and $\Gamma_0^k \in \mathcal{C}(\Gamma_0^{in})$ the length of $\Gamma_0^k$ and $\Gamma_\alpha^k = T_\alpha(\Gamma_0^k)$ is equal. This can be seen by
  \begin{align}
    \int_{\Gamma_\alpha^k} 1 \dint s = 
    \int_{T_\alpha^{-1} \circ \Gamma_\alpha^k} 1 \abs{\partial_s T_\alpha} \dint s =
    \int_{\Gamma_0^k} e^{\alpha} \dint s.
  \end{align}
   In this case Condition \eqref{state:eq:isometry assumption} is fulfilled and we can either use \eqref{eq:state:ConformalMapping} or \eqref{eq:state:IsometricMapping}. Examples which illustrate the effects of the choices for $\alphaspace$ are compared in Figures \ref{fig:state:Example 1}, \ref{fig:state:Example 2} and \ref{fig:state:Example 3}.
  \begin{remark}
  Note that the conformal parameter $\alpha \in \alphaspace$ acts as the control of the optimization problem. Furthermore, since $\alpha$ must fulfill the Laplace equation together with either Dirichlet or Neumann boundary condition on the inflow $\Gamma_0^{in}$, we can also interprete $u :=  \alpha|_{\Gamma_0^w}$ as the control.
  \end{remark}
  
  \section{Discretization Scheme}
  \label{state:sec:discretization}
  
  In order to solve Problem \eqref{eq:state:OptSystemConformal} we derive a full discretization using finite elements. Therefore, let $\Omega_h$ be a triangulation approximating the reference domain $\Omega_0$ and let $\Gamma_h$ be the boundary of $\Omega_h$ which decomposes into the inflow $\Gamma_h^{in}$ and wall parts $\Gamma_h^w$. Let $V_h$ be a finite element space on $\Omega_h$, let $W_h$ be a finite element space on the boundary and let there exist a linear trace operator $G: V_h \rightarrow W_h$. Let $\Psi_h, \omega_h, \alpha_h \in V_h$ and let $g_0, \sigma_d \in W_h$.
  
  First we deal with \eqref{eq:state:ConformalMapping}, which leads to the simplification $\Psi = g$ on $\Gamma_0$. Then, there exist linear operators $A_{11}$ and $A_{12}$ and an inhomogeneous right hand side $b_1$ such that
   \begin{align}
   \label{eq:state:discrete cef}
    A_{11} \Psi_h + A_{12} (e^{2\alpha_h}\omega_h) &= b_1
   \end{align}
  approximates Equation \eqref{eq:state:OptSystemConformal c} together with the boundary conditions \eqref{eq:state:OptSystemConformal e} and \eqref{eq:state:OptSystemConformal f}. Here the operation $e^{2\alpha_h}\omega_h$ is to be interpreted pointwise. For details on computing the approximation we refer to any text book about finite elements (e.g. \cite{ern2004theory}). In our case we use the finite element software FreeFem++ (see \cite{pironneaufreefem++}) to compute the discretization.
  
   On the other hand when using \eqref{eq:state:IsometricMapping} we have to show how the Dirichlet condition
  \begin{align}
   \label{eq:isometric inflow condition1}
   \Psi &= g_0 \circ I_\alpha^{-1} \circ T_\alpha \qquad &&\mbox{on $\Gamma_h^{in}$}
   \\
   \label{eq:isometric inflow condition2}
   \Psi &= g_0 \qquad &&\mbox{on $\Gamma_h^w$}
  \end{align}
  can be transformed in such a way that it can be handled by the NLP-solver. Let $A_{11}$, $A_{12}$ and $b_1$ be chosen such that \eqref{eq:state:discrete cef} approximates Equation \eqref{eq:state:OptSystemConformal c} together with the boundary conditions \eqref{eq:isometric inflow condition2} and \eqref{eq:state:OptSystemConformal f}, i.e. it does not account for the inflow Dirichlet condition. To implement the inflow Dirichlet condition, let $\Gamma_h^k \in \mathcal{C}(\Gamma_h^{in})$ be an inflow part and let $\xi_0$ and $\xi_1$ be the first and the last point of $\Gamma_h^k$, respectively. We introduce the additional length variable $L_k \in W_h|_{\Gamma_h^k}$,  which we define by
  \begin{align}
  \label{eq:length constraint 1}
   L_k(\xi) = \int_{\xi_0}^{\xi} e^{\alpha_h} \dint s.
  \end{align}
  Let $s_k: [0, l_k] \rightarrow \Gamma_h^k$ be the isometric parameterization of the curve $\Gamma_h^k$, where $l_k$ is the length of that curve and $s_k(0) = \xi_0$ and $s_k(l_k) = \xi_1$ holds. Because of the necessary length constraint \eqref{state:eq:isometry assumption}
  \begin{align}
  \label{eq:length constraint 2}
   L_k(\xi_1) = \int_{\xi_0}^{\xi_1} 1 \dint s = l_k
  \end{align}
  holds and on $\Gamma_h^k$ Equation \eqref{eq:isometric inflow condition1} is equivalent to
  \begin{align}
   \Psi = g_0 \circ s_k \circ L_k.
  \end{align}
  Then, the function $g_k := g_0 \circ s_k$ can be precomputed and Equation \eqref{eq:length constraint 1} can be approximated by
  \begin{align}
   A_{k1} L_k + A_{k2} e^{\alpha_h} = 0
  \end{align}
  where $A_{k1}$ and $A_{k2}$ are linear operators.
  
  Moving on to the next Equation, there exists a linear operator $A_2$ such that
  \begin{align}
   A_2 \omega_h = 0
  \end{align}
  approximates Equation \eqref{eq:state:OptSystemConformal d}. The inequality constraints are interpreted pointwise by
  \begin{align}
   \sigma_d + G\omega &\leq \delta \qquad &&\mbox{on $\Gamma_h^w$}
   \\
   - \sigma_d - G\omega &\leq \delta \qquad &&\mbox{on $\Gamma_h^w$}.
  \end{align}
  
  We have to assure that $\alphaspace_h$ is a sufficient approximation of $\alphaspace$. When using \eqref{inflowbd_length} there exists a linear operator $A_{3}$ discretizing $\Delta \alpha = 0$ together with $\alpha = 0$ on $\Gamma_0^{in}$ and we can define
  \begin{align}
   \alphaspace_h := \{ \alpha_h \in V_h; \; A_{3} \alpha_h = 0 \}.
  \end{align}
  In the same way for \eqref{inflowbd_curvature} there exists a linear operator $A_{3}$ approximating $\Delta \alpha = 0$ together with $\partial_\nv \alpha = 0$ on $\Gamma_0^{in}$ and we define
  \begin{align}
   \alphaspace_h := \{ \alpha_h \in V_h; \; A_{3} \alpha_h = 0 \}.
  \end{align}
  For \eqref{inflowbd_lengthcurvature} let $A_{3}$ approximate $\Delta \alpha = 0$ together with $\partial_\nv \alpha = 0$ on $\Gamma_0^{in}$. Furthermore, the additional length constraint must hold which we can approximate using a linear operator $A_{4}$ and an inhomogeneous right hand side $b_4$. This yields
  \begin{align}
   \alphaspace_h := \{ \alpha_h \in V_h; \; A_{3} \alpha_h = 0; \; 
   A_{4} e^{\alpha_h} &= b_4 \}.
  \end{align}
  
 \begin{remark}
  While the the theoretical analysis has required a high regularity of the control space, we now relax this and only use a $H^1(\Omega_0)$-regularization instead of a $H^4(\Omega_0)$-regularization. As a compensation we apply additional box constraints on the control, i.e.
 \begin{align}
   \alpha_l \leq \alpha \leq \alpha_u \qquad \mbox{in $\Omega_0$}
  \end{align}
  with $\alpha_l, \alpha_u \in \R$. Moreover, $\alpha_l \leq 0 \leq \alpha_u$ should hold, because otherwise $\Omega_0 \notin \admshape$. The computed results show that this is sufficient for the numerics.
 \end{remark}

  Putting everything together, we end with an NLP of the following form
  \begin{align}
  \label{eq:state:OptSystemConformalDiscrete}
  \begin{aligned}
   \mbox{minimize}_{(\delta, \alpha_h, \Psi_h, \omega_h) \in M_h} &\omit\rlap{$\quad \delta
   +\varepsilon \vectornorm{\Grad \alpha_h}^2_{L^2(\Omega_0)}$}
   \\
   \mbox{with} \quad M_h =\R \times V_h &\times V_h \times V_h
   \\
   \mbox{subject to }A_{11} \Psi_h + A_{12} (e^{2\alpha_h}\omega_h) &= b_1
   \\
   \Psi|_{\Gamma_0^k} &= g_k(L_k)
   \qquad && \mbox{for $\Gamma_h^k \in \mathcal{C}(\Gamma_h^{in})$, if \eqref{eq:state:IsometricMapping}}
   \\
   A_{k1} L_k + A_{k2} e^{\alpha_h} &= 0
   \qquad && \mbox{for $\Gamma_h^k \in \mathcal{C}(\Gamma_h^{in})$, if \eqref{eq:state:IsometricMapping}}
   \\
   A_2 \omega_h &= 0
   \\
   A_3 \alpha_h &= 0
   \\
   A_{4} e^{\alpha_h} &= b_4
   \qquad && \mbox{if \eqref{inflowbd_lengthcurvature}}
   \\
   \alpha_l \leq \alpha_h &\leq \alpha_u \qquad &&\mbox{on $\Omegab_h$}
   \\
   \sigma_d + G\omega &\leq \delta \qquad &&\mbox{on $\Gamma_h^w$}
   \\
   -\sigma_d - G\omega &\leq \delta \qquad &&\mbox{on $\Gamma_h^w$}.
   \end{aligned}
  \end{align}
  This problem has the form of an NLP which can be solved using existing methods. In the following we use the interior point solver LOQO (see \cite{vanderbei1999interior}).
  
  \subsection{Reconstruction of the Domain}
  \label{state:sec:reconstruction}
  After an optimal solution of Problem \eqref{eq:state:OptSystemConformalDiscrete} has been computed, it remains to reconstruct the optimal domain $\Omega_{opt}$. Therefore, let $\alpha_{opt} \in \mathbb{P}_1$ be the projection of the optimal conformal parameter into the space $\mathbb{P}_1$, i.e. the space of Lagrangian finite elements of order one. Let $\theta \in \mathbb{P}_1^2$ represent the conformal map which is to be computed. Let $E$ be the set of edges of the triangulation of $\Omega_h$. For every edge $[i, j] \in E$ the corresponding vertex coordinates are denoted by $v_i, v_j \in \R^2$. By definition of the conformal parameter $e^{\alpha_{opt}}$ is the scaling factor of every infinite length element. Then, for every finite edge $[i, j] \in E$ the following should hold
  \begin{align}
   \abs{\theta(v_i)-\theta(v_j)} \approx e^{0.5(\alpha_i + \alpha_j)} \abs{v_i-v_j}
  \end{align}
  where $\alpha_i$ and $\alpha_j$ are the values of $\alpha_{opt}$ in $v_i$ and $v_j$, respectively. Considering this $\theta$ can be computed by minimizing the functional
  \begin{align}
  \label{state:eq:reconstruction}
   \mbox{minimize}_{\theta \in \mathbb{P}_1^2} \sum_{[i, j] \in E}
   \left( 
   \abs{\theta(v_i)-\theta(v_j)}^2 - e^{(\alpha_i + \alpha_j)} \abs{v_i-v_j}^2
   \right)^2.
  \end{align}
  Computing the solution $\theta$ is easy and can be done using existing methods. The optimal domain is then given by $\Omega_{opt} := \theta(\Omega_h)$.

  This gives rise to Algorithm \ref{algo:linf state constraint} for supremum norm shape optimization problems with state constraints.
  \begin{algorithm}[htb]
\caption{$C^0$ Shape Optimization with State Constraints (2D)}
\label{algo:linf state constraint}
 \begin{algorithmic}[1]
 \STATE Let the initial domain $\Omega_0$ and target wall shear stress $\sigma_d$ be given.
 \STATE Solve \eqref{eq:state:OptSystemConformalDiscrete} using an NLP solver, which yields the optimal conformal parameter $\alpha_{opt}$.
 \STATE Use $\alpha_{opt}$ to reconstruct the optimal domain $\Omega_{opt} := \theta(\Omega_h)$ by solving \eqref{state:eq:reconstruction}.
\end{algorithmic}
\end{algorithm}

\section{Numerical Results}
\label{sec:numerical results}

We want test the proposed method using two different geometric scenarios. The first is a simple rectangular geometry which we use to demonstrate the effect of the different choices of the conformal parameter set $\alphaspace$ (cf. Section \ref{sec:inflow constraints}). The second geometry is a distributor with a small inflow tube at the top and a broad outflow at the bottom, which is inspired by the industrial application of a polymer flow distributor. In the end we provide statistics for all examples on different mesh sizes.

Note that both geometries are smooth except for a finite set of corner points. Hence, we have dropped the assumption that $\Omega_0$ is of class $C^{4,1}$. The numerics do still work but we must consider the following fact: Conformal maps are angle preserving, therefore, the angle of each corner is fixed and cannot be altered. Thus, the set of admissible shapes does only contain geometries whose corner points have the same angles than the reference geometry.

Let $V_h$ be the space of second order Lagrangian elements on $\Omega_0$ and let $W_h$ be the space of first order Lagrangian elements on the boundary. We use FreeFem++ \cite{pironneaufreefem++} to generate the mesh as well as to assemble the problem and compute the finite element matrices necessary for the NLP \eqref{eq:state:OptSystemConformalDiscrete}. Solving the NLP is the crucial step which can be effectively done using the interior point solver LOQO \cite{vanderbei1999interior} and its interface to the algebraic modeling language AMPL \cite{fourer2002ampl}.

\subsection{Rectangular Geometry}
\begin{figure}[p]
\begin{leftfullpage}
  \centering
  \includegraphics{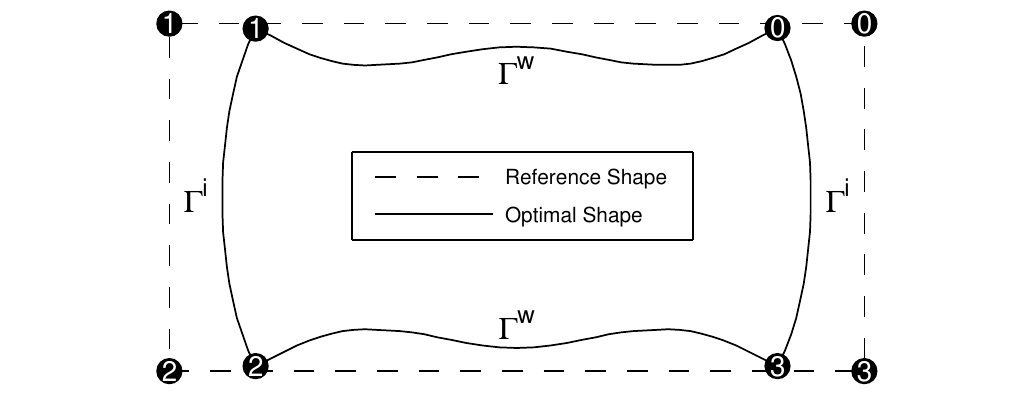}
  \\
  \includegraphics{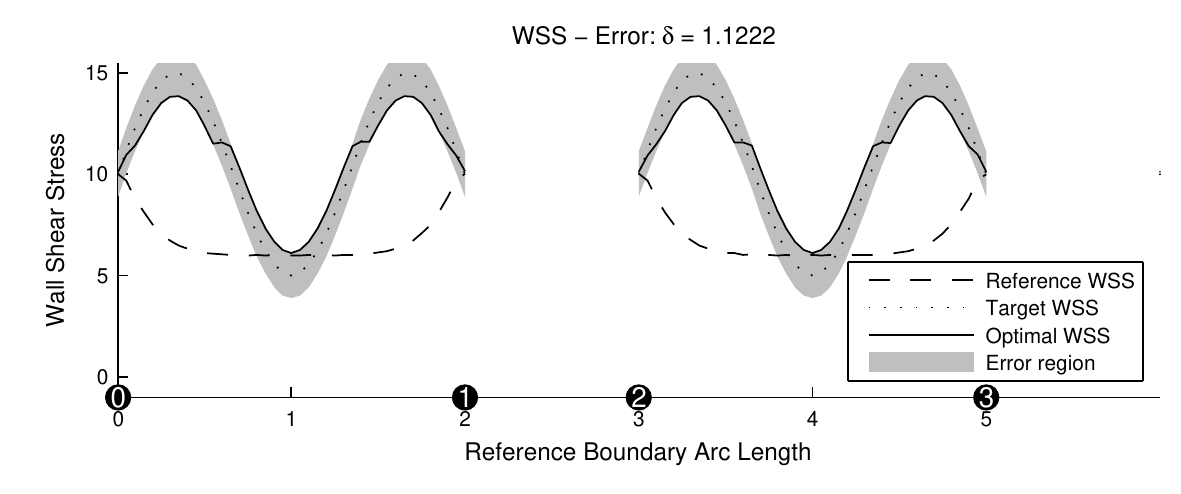}
  \caption{Example 1: Optimal solution using \eqref{inflowbd_length} and \eqref{eq:state:ConformalMapping}/\eqref{eq:state:IsometricMapping} which coincide.}
  \label{fig:state:Example 1}

  \includegraphics{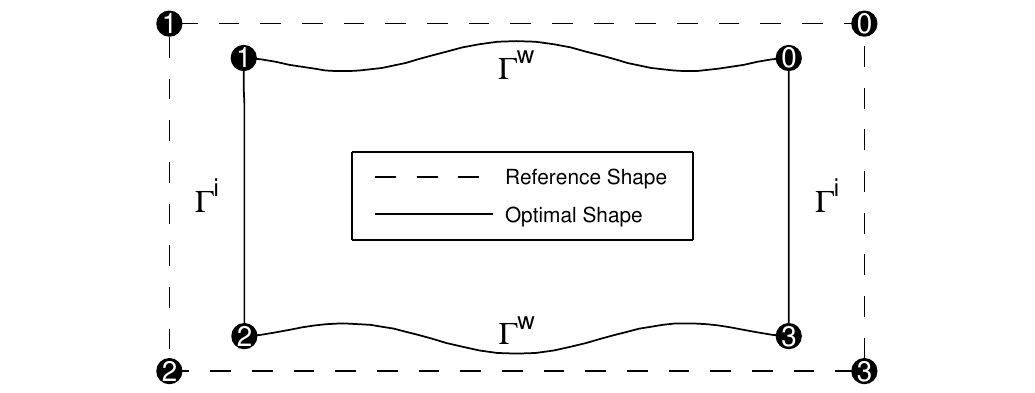}
  \\
  \includegraphics{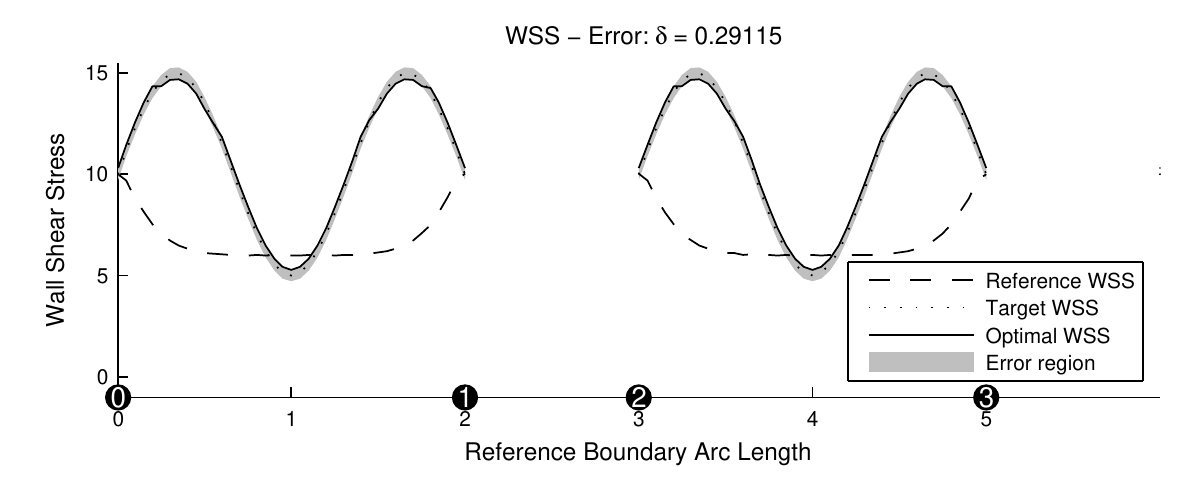}
  \caption{Example 2: Optimal solution using \eqref{inflowbd_curvature} and \eqref{eq:state:ConformalMapping}.}
  \label{fig:state:Example 2}
  \end{leftfullpage}
  \end{figure}
  
  \begin{figure}[p]
  \begin{fullpage}
  \centering
  \includegraphics{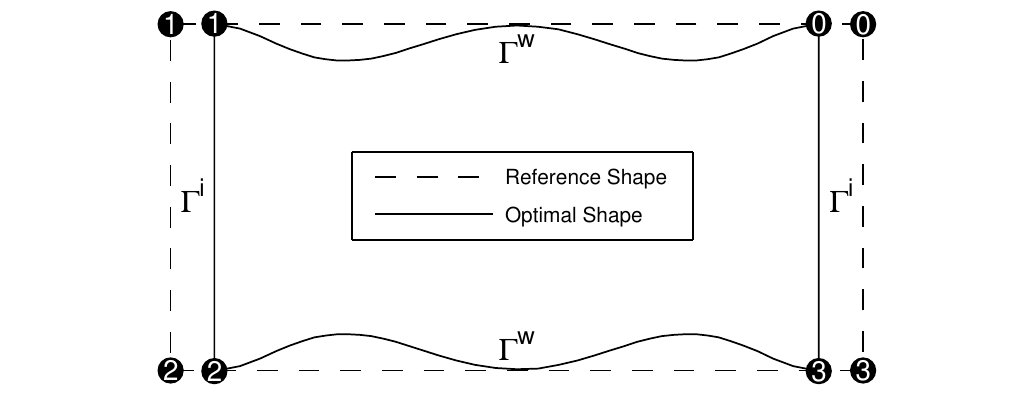}
  \\
  \includegraphics{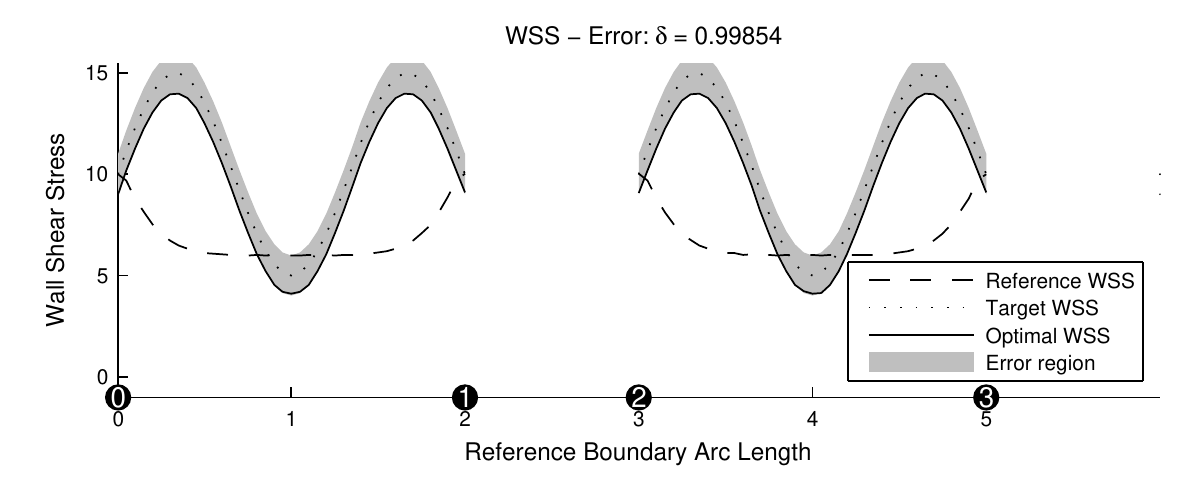}
  \caption{Example 3: Optimal solution using \eqref{inflowbd_lengthcurvature} and \eqref{eq:state:IsometricMapping}.}
  \label{fig:state:Example 3}
  
  \includegraphics{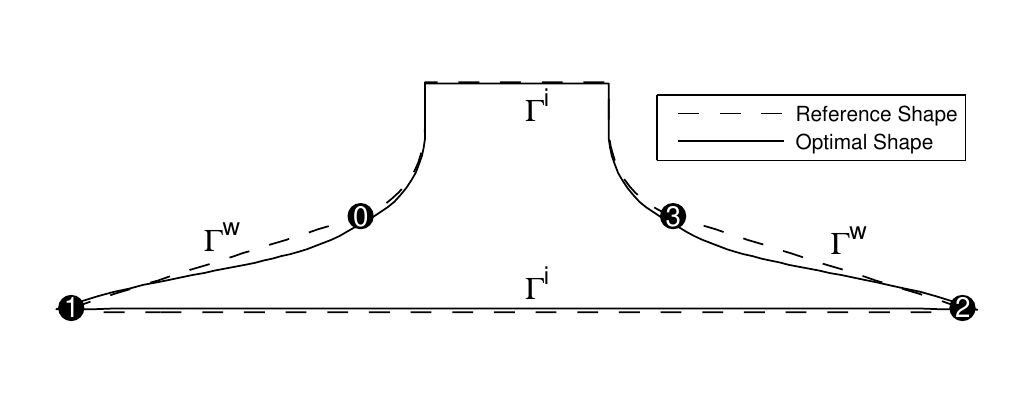}
  \\
  \includegraphics{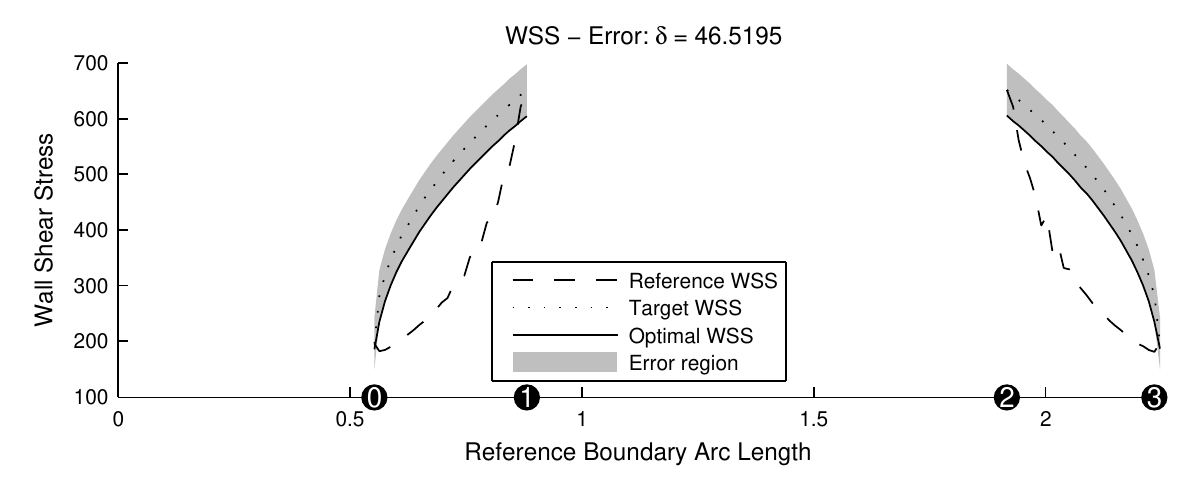}
  \caption{Example 4: Optimal solution using \eqref{inflowbd_lengthcurvature} and \eqref{eq:state:IsometricMapping}.}
  \label{fig:state:Example 4}
  \end{fullpage}
  \end{figure}
  In the first three test cases we compute solutions of Problem \eqref{eq:state:OptSystemConformalDiscrete} on a simple rectangular domain given by
\begin{align}
 \label{eq:state:Tube Domain}
 \Omega_0 = \{(x, y) \in \R^2; -1 < x < 1; -0.5 < y < 0.5 \}.
\end{align}
 Let the left and right sides of $\Omega_0$ be the inflow boundaries $\Gamma_0^{in}$ and let the top and bottom sides be the wall boundaries $\Gamma_0^w$. Let the inflow condition $u_0 \in H^{\frac{1}{2}}(\Gamma_0)$ be defined by
\begin{align}
 u_0(x, y) =  20 \; \sign (x)\left( 0.5^4 -y^4 \right).
\end{align}
This condition is chosen in such a way that $u_0$ vanishes on the wall boundaries and that the total amount flowing in and out of the domain is normalized to one. Integration over the boundary yields
\begin{align}
 \label{eq:state:Tube Psi_0}
 g_0(x, y) = 20 \cdot 0.5^4 \; y - 4 y^5.
\end{align}

\begin{remark}
\label{rm:corner points}
Note that under sufficient regularity assumptions the value of $\Delta \Psi$ in a corner point is already determined by the boundary condition $g_0$. This is due to the fact that there are two independent boundary directions for that corner point and thus the value of $\sigma = \Delta \Psi$ is determined by the stream function boundary condition. Due to this property it is reasonable to use target wall shear stresses $\sigma_d$ which agree with the intrinsic condition in the corner points.
\end{remark}

On account of this remark we define the following target wall shear stress
\begin{align}
\label{eq:state:Tube sigma_d}
 \sigma_d(x, y) = \sign(y)(-5 \cos(1.5 \pi x) + 10)
\end{align}
for $(x, y) \in \Gamma_0$. See Figures \ref{fig:state:Example 1}-\ref{fig:state:Example 3} for an illustration. Furthermore, we choose the regularization parameter $\varepsilon = 0.01$ and the control constraints $\alpha_l = -0.45$ and $\alpha_u = 0.45$. The control constraints are chosen in such a way that they are active in all test cases. See Table \ref{state:table:performanceinf} for results with inactive control constraints which show that the target wall shear stress is actually reachable if the control is unconstrained.

Using this setup we compute the optimal solutions for the following three test cases, where our goal is to compare the effects of the different choices of $\alphaspace$.
\begin{itemize}
 \item Example 1: Using \eqref{inflowbd_length} and \eqref{eq:state:ConformalMapping}/\eqref{eq:state:IsometricMapping} which are equivalent.
 \item Example 2: Using \eqref{inflowbd_curvature} and \eqref{eq:state:ConformalMapping}.
 \item Example 3: Using \eqref{inflowbd_lengthcurvature} and \eqref{eq:state:IsometricMapping}.
\end{itemize}

The results are shown in Figures \ref{fig:state:Example 1}, \ref{fig:state:Example 2} and \ref{fig:state:Example 3}. We have used a mesh with $953$ vertices. The first plot compares the reference shape $\Omega_0$ with the optimal shape $\Omega_{opt}$. The second one is a plot over the arc length of the reference boundary, which shows the absolute values of the reference wall shear stress on $\Omega_0$, the target wall shear stress $\sigma_d$ and the optimal wall shear stress on $\Omega_{opt}$ together with the error region, i.e. the region between $\sigma_d - \delta$ and $\sigma_d + \delta$. The black markers help to draw a connection between geometry and arc length plot. In all three cases we have succeeded to drive the wall shear stress close to the target wall shear stress.

From Figure \ref{fig:state:Example 1} we can see that \eqref{inflowbd_length} has preserved the length of both inflow boundaries, but their curvature has changed. On the other hand \eqref{inflowbd_curvature} has kept the inflow straight but it was shortened, as we can see in Figure \ref{fig:state:Example 2}. Only \eqref{inflowbd_lengthcurvature} has preserved both characteristic properties, as shown by Figure \ref{fig:state:Example 3}.

These examples have illustrated how constraints on the conformal parameter can preserve characteristic properties of the inflow boundaries. With the application in view it makes sense to use \eqref{inflowbd_lengthcurvature} and \eqref{eq:state:IsometricMapping}, because these preserve length and curvature of the inflow and leave the inflow condition unaltered. The only thing which can still change is the relative position of two inflow parts to each other as we can see from Figure \ref{fig:state:Example 3}.

\begin{figure}[htb]
\centering
\subfigure[Example 1]{\includegraphics{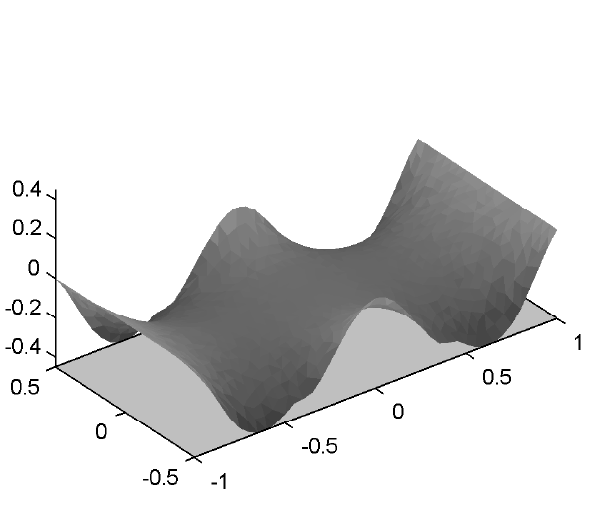}}
\subfigure[Example 2]{\includegraphics{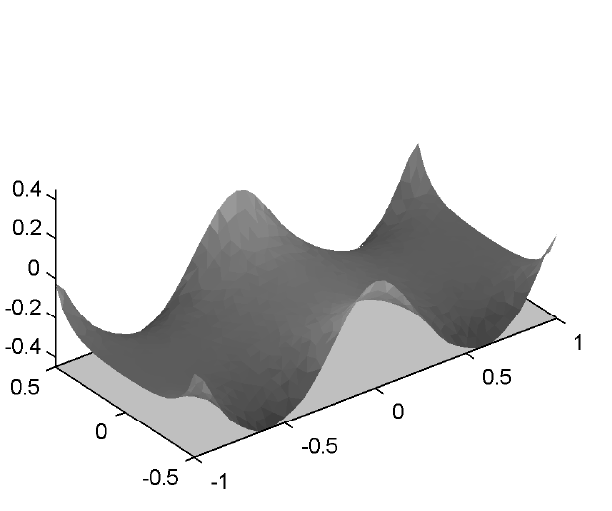}}
\\
\subfigure[Example 3]{\includegraphics{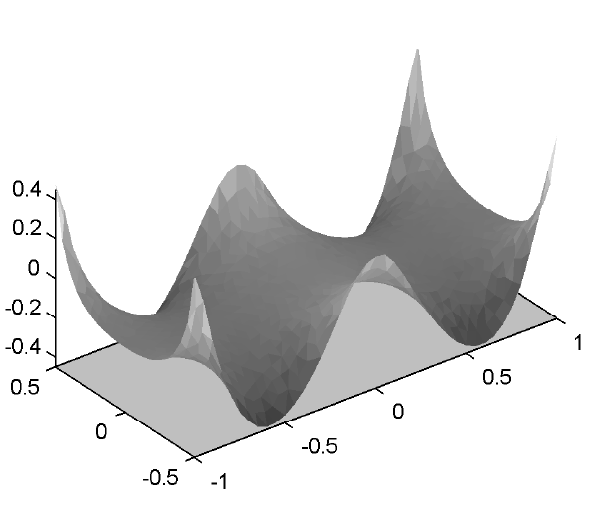}}
\subfigure[Example 4]{\includegraphics{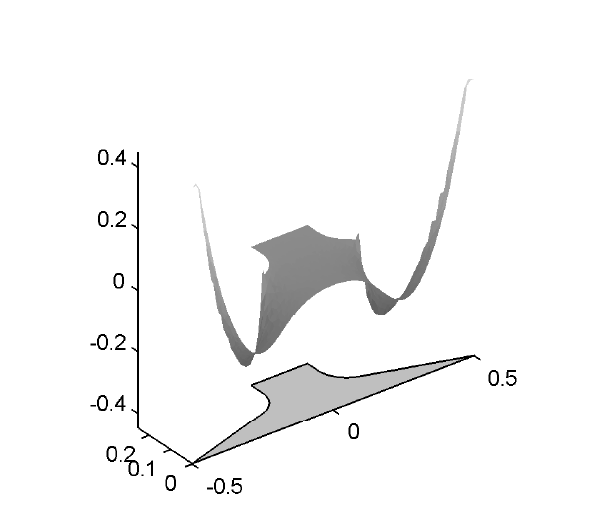}}
  \caption{Optimal conformal parameters $\alpha_{opt}$ corresponding to the test cases. Note that the control constraints $-0.45 \leq \alpha \leq 0.45$ are active in all four cases.}
  \label{state:fig:conformal parameter}
\end{figure}

  \subsection{Distributor Geometry}
  The previous examples have shown the general functionality of the approach. Next we discuss a distributor geometry depicted in Figure \ref{fig:state:Example 4}. We have a small inflow tube at the top from where the geometry widens into a broad outflow at the bottom.
  
  Let $\Omega_0$ be the reference domain as shown in Figure \ref{fig:state:Example 4}. The inflow condition $u_0$ is defined in the following way. Let $s_t: [0, 0.4] \rightarrow \Gamma_0^{i_t}$ and $s_b: [0, 1] \rightarrow \Gamma_0^{i_b}$ be isometric parameterizations of the top and bottom boundary, respectively. Then, we define
  \begin{align}
   \begin{aligned}
   u_0(s_t(t)) &= \frac{375}{4} ((t-0.2)^2 - 0.2^2) \qquad && 
   \mbox{$t \in [0, 0.4]$, i.e. on $\Gamma_0^{i_t}$}
   \\
   u_0(s_b(t)) &= -\frac{5632}{5} ((t-0.5)^{10} - 0.5^{10}) \qquad && 
   \mbox{$t \in [0, 1]$, i.e. on $\Gamma_0^{i_b}$}
   \\
   u_0 & = 0 \qquad && 
   \mbox{on $\Gamma_0^w$}
   \end{aligned}
  \end{align}
  As in the previous example $u_0$ is scaled in such a way that the total amount flowing into the domain and the total amount flowing out is normalized to one. The boundary condition for the stream function $g_0$ can be obtained by integration of $u_0$ over the boundary. The next task is to define the target wall shear stress $\sigma_d$. Therefore, let $s_l: [0, l_l] \rightarrow \Gamma_0^{w_l}$ and $s_r: [0, l_r] \rightarrow \Gamma_0^{w_r}$ be isometric parameterizations of the left and right wall boundary, respectively and let $\sigma_0$ be the wall shear stress on the reference domain. Due to Remark \ref{rm:corner points} we define $\sigma_d$ such that it agrees with the reference wall shear stress $\sigma_0 = \sigma(\Omega_0)$ in the corner points of the domain. We define
  \begin{align}
   \begin{aligned}
    \sigma_d(s_l(t)) &= \left( 1- \sqrt{\frac{t}{l_l}} \right) \sigma_0(s_l(0)) + \sqrt{\frac{t}{l_l}} \sigma_0(s_l(l_l)) \quad && 
   \mbox{$t \in [0, l_l]$}
    \\
    \sigma_d(s_r(l_r-t)) &= \left( 1- \sqrt{\frac{t}{l_r}} \right) \sigma_0(s_r(l_r)) + \sqrt{\frac{t}{l_r}} \sigma_0(s_r(0)) \quad && 
   \mbox{$t \in [0, l_r]$}.
   \end{aligned}
  \end{align}
  To clarify this we refer to the plot of the target wall shear stress $\sigma_d$ in Figure \ref{fig:state:Example 4}.
  
  In order to get realistic results, we want that both curvature and length of the lower inflow boundary are preserved and that the inflow condition is mapped isometrically, i.e. we use \eqref{inflowbd_lengthcurvature} and \eqref{eq:state:IsometricMapping}. On the upper inflow boundary we use \eqref{inflowbd_length}, but as we can see from the result bending is not a problem. Furthermore, we use the regularization parameter $\varepsilon = 0.1$ and control constraints $\alpha_l = -0.45$ and $\alpha_u = 0.45$.
  
  We have used a mesh with 967 vertices and results are shown in Figure \ref{fig:state:Example 4}. Again we have succeeded to reach a wall shear stress close to the target stress and the characteristic properties of the geometry have been preserved.
  
  \begin{table}[htb]
 \begin{center}
  \begin{tabular}{l|rrrrrr}
  & \#Vertices & \#Variables & \#Iterations & Time [s] & $\delta$
  \\
  \hline
  Example 1
  &     262 &     2840 &     33 &      16 &    0.98 \\ Rectangle
  &  \it953 & \it10832 &  \it41 &  \it211 & \it1.12 \\
  &    2097 &    24260 &     52 &    1255 &    1.17 \\
  &    3731 &    43568 &     86 &    7299 &    1.19 \\
  \hline
  Example 2
  &     262 &     2878 &     32 &      16 &    0.15 \\ Rectangle
  &  \it953 & \it10910 &  \it44 &  \it197 & \it0.29 \\
  &    2097 &    24378 &     56 &    1542 &    0.32 \\
  &    3731 &    43726 &     87 &    5707 &    0.32 \\
  \hline
  Example 3
  &     262 &     2954 &     39 &      33 &     0.91 \\ Rectangle
  &  \it953 & \it11066 &  \it55 &  \it412 &  \it1.00 \\
  &    2097 &    24614 &     41 &    1479 &     1.12 \\
  &    3731 &    44042 &     45 &    5606 &     1.14 \\
  \hline
  Example 4
  &     253 &     2705 &     58 &      19 &     46.30 \\ Distributor
  &  \it967 & \it10941 &  \it83 &  \it335 &  \it46.52 \\
  &    2076 &    23927 &     73 &    1458 &     56.86 \\
  &    3746 &    43613 &    103 &    5035 &     88.70 \\
  \end{tabular}
  \end{center}
  \caption{Performance overview for the previous examples on different mesh sizes with active control constraints $\alpha_l = -0.45$ and $\alpha_u = 0.45$. The mesh cases which have been used for Figures \ref{fig:state:Example 1}-\ref{fig:state:Example 4} are marked in \it italics.}
  \label{state:table:performance45}
\end{table}

  \subsection{Numerical Reconstruction of the Domain}
  The optimal conformal parameters $\alpha_{opt}$ for the previous test cases are plotted in Figure \ref{state:fig:conformal parameter}. They are used to reconstruct the optimal shapes which are shown in Figures \ref{fig:state:Example 1}-\ref{fig:state:Example 4}. As mentioned in Section \ref{state:sec:reconstruction} this is relatively easy by solving the minimization problem \eqref{state:eq:reconstruction}. For this task we have again used LOQO which has no struggle computing the solution and the computation time is small compared to the time used for solving the main NLP.

\subsection{Solver Performance with Control Constraints}
  For all discussed examples Table \ref{state:table:performance45} shows an over\-view of the solver performance for different mesh sizes. The first column contains the total number of mesh vertices in the discretization. The mesh sizes for the rectangular and distributor geometry have been chosen in a way that the results are comparable. The second and third column show the number of variables in the NLP problem and the number of iterations needed by the LOQO solver. The fourth column containts the computation time  in seconds for solving the NLP, not including the time used for pre- and postprocessing. Of course the computation time depends on the PC infrastructure used, but here it is primarily meant for comparison. The last column gives the value of $\delta$, which is the absolute error between wall shear stress and target wall shear stress.

  From Table \ref{state:table:performance45} we can see that for all examples the value of $\delta$ increases when the size of the mesh is reduced. This may seem unexpected but the following two explanations may hold: In the discrete case the state constraints which determine $\delta$ must only hold pointwise in every boundary vertex. Thus, a bigger discretization error can lead to a smaller value of $\delta$. And the performance of the NLP solver may decrease with increasing complexity of the problem such that the computed solution lies further away from the optimum.
  
  \subsection{Solver Performance with Inactive Control Constraints}
  Table \ref{state:table:performanceinf} shows results for the same test cases as before, but computed with control constraints chosen in a way that they are inactive ($\alpha_l = -1$ and $\alpha_u = 1$) in the optimal solution of the respective examples. It turns out that the value of $\delta$ is almost zero for all cases, which means that the target wall shear stress is reachable. For the results in Table \ref{state:table:performance45} we have used tight control constraints to essentially restrict the control space and thus the target wall shear stress has become unattainable. So the reason to use tighter control constraints was to restrict the set of conformal parameters in such a way that the target wall shear stress is not reachable, in order to get a good test case for our supremum norm shape optimization approach.
  
  \begin{table}[htb]
 \begin{center}
  \begin{tabular}{l|rrrrrr}
  & \#Vertices & \#Variables & \#Iterations & Time [s] & $\delta$
  \\
  \hline
  Example 1
  &     262 &     2840 &     21 &      10 &    1.0e-09 \\ Rectangle
  &     953 &    10832 &     27 &     137 &    5.6e-11 \\
  &    2097 &    24260 &     29 &     679 &    3.6e-11 \\
  &    3731 &    43568 &     31 &    2506 &    5.8e-11 \\
  \hline
  Example 2
  &     262 &     2878 &     22 &      11 &    2.9e-10 \\ Rectangle
  &     953 &    10910 &     27 &     119 &    3.9e-10 \\
  &    2097 &    24378 &     30 &     807 &    7.2e-11 \\
  &    3731 &    43726 &     33 &    2499 &    8.1e-11 \\
  \hline
  Example 3
  &     262 &     2954 &     31 &      22 &     3.8e-10 \\ Rectangle
  &     953 &    11066 &     27 &     226 &     8.5e-11 \\
  &    2097 &    24614 &     30 &    1254 &     4.9e-10 \\
  &    3731 &    44042 &     32 &    4536 &     3.3e-10 \\
  \hline
  Example 4
  &     253 &     2705 &     56 &      18 &     5.4e-09 \\ Distributor
  &     967 &    10941 &     75 &     303 &     6.2e-10 \\
  &    2076 &    23927 &     90 &    1800 &     1.3e-15 \\
  &    3746 &    43613 &    116 &    5668 &     1.6e-12 \\
  \end{tabular}
  \end{center}
  \caption{Performance overview for the previous examples on different mesh sizes with inactive control constraints $\alpha_l = -1$ and $\alpha_u = 1$.}
  \label{state:table:performanceinf}
\end{table}

Note that we could have obtained the same results by disabling the control constraints, i.e. $\alpha_l = -\infty$ and $\alpha_u = \infty$, but as one may have expected the NLP solver shows a better performance if control constraints are available, even if they are not active. Thus it is in general a good idea to apply control constraints, even when they are not necessary for the application.
  
  \section{Conclusion}
  We have presented a general numerical approach to solve shape optimization problems with state constraints on two-dimensional geometries, by illustrating how the shape-dependent problem can be transformed into a nonlinear problem on a fixed reference domain using conformal pull-back. And we have demonstated that the structure of the nonlinear problem is such that it can be solved by modern NLP solvers like LOQO. Furthermore, we have suggested constraints on the conformal parameter that preserve the shape of the inflow boundaries and therewith the characteristics of the geometry.
  
  It is relatively easy to transfer the approach to a wide class of problems on two-dimensional domains with many different constraints. However, when moving on to higher dimensional geometries problems arise: It is still possible to use conformal maps, but there is no Riemann Mapping Theorem and the class of reachable geometries would be negligibly small. Of course one could use more general mappings, but this would increase the complexity of the problem, which would already be quite high in the three-dimensional setup.
  
  \section*{Acknowledgments}
  This work was supported by the German Federal Ministry of Education and Research (BMBF) grant no. 03MS606F.


\end{document}